\theoremstyle{plain}
\newtheoremstyle{myexstyle}
{}{}{}{}{\bfseries}{.}{ }{}
\newtheorem{definition}{\textsc{Definizione}}[section]
\newtheorem{theorem}{\textsc{Teorema}}[section]
\newtheorem{proposition}{\textsc{Proposizione}}[section]
\theoremstyle{myexstyle}
\newtheorem{remark}[theorem]{\textsc{Osservazione}}
\newtheorem{example}{\textsc{Esempio}}[section]
\title{Varietà di Jacobi parziali}
\author{Patrick Cabau\thanks{\footnotesize \texttt{patrickcabau@gmail.com}} \\ {\footnotesize https://orcid.org/0000-0003-1861-6180} }
\date{}
\begin{document}

\maketitle
\renewcommand{\abstractname}{Riassunto}
\begin{abstract}
Introduciamo la nozione di varietà di Jacobi parziale nel quadro conveniente ($c^\infty$-completo) di Frölicher, Kriegl e Michor. Forniamo esempi espliciti sia in dimensione finita sia in dimensione infinita e analizziamo la distribuzione caratteristica associata a tale struttura. Concludiamo segnalando alcune direzioni di ricerca che potrebbero essere approfondite in studi futuri.
\end{abstract}

\newenvironment{englishabstract}{
    \renewcommand{\abstractname}{Abstract}%
    \begin{abstract}
}{
    \end{abstract}
}
\begin{englishabstract}
The notion of partial Jacobi manifold is introduced in the convenient ($c^\infty$-complete) framework of Frölicher, Kriegl, and Michor. Explicit examples are provided in both finite and infinite dimensions, and the characteristic distribution associated with this structure is analysed. Several research directions that would merit further study are indicated.
\end{englishabstract}

\newcommand{\N}{\mathbb{N}}
\newcommand{\R}{\mathbb{R}}
\newcommand{\C}{\mathbb{C}}
\newcommand{\e}{\text{e}}
\newcommand{\pr}{\operatorname{pr}}

\textbf{MSC 2020:} 
46T05, 
58B20, 
53D17, 
53D10,  
70G45,  
58A30.  


\medskip
\noindent
\textbf{Ringraziamenti.} 
L'autore ringrazia sinceramente il Prof. Fernand Pelletier per le stimolanti discussioni sulla nozione di struttura parziale, la quale trova applicazione in diversi ambiti di ricerca.

\section{Introduzione}
\label{__Introduzione}

Le varietà di Jacobi di dimensione finita sono state introdotte in modo indipendente da A. Kirillov in \cite{Kir76} e A. Lichnerowicz in \cite{Lic78} \textit{via} definizioni diverse ma equivalenti (cf. \cite{Marl91}).\\

Queste strutture, che generalizzano simultaneamente le strutture di Poisson, quelle simplettiche e quelle di contatto, permettono in particolare di modellizzare sistemi dissipativi\footnote{Per sistemi dissipativi la cui dinamica è descritta mediante la parentesi di Jacobi, la perdita di energia può essere codificata tramite il campo vettoriale $E$.} (cf. \cite{DeLVa19}), analogamente a quanto avviene per i sistemi metriplettici\footnote{Un \emph{sistema metriplettico}\index{sistema!metriplettico} è costituito da una varietà differenziabile $M$ di dimensione finita, due applicazioni lisce $P$ e $G$ dal fibrato cotangente $T^\ast M$ al fibrato tangente $TM$ sopra l'identità, e due funzioni lisce:
\begin{enumerate}
\item[$\bullet$] 
l'Hamiltoniana $H$ o energia totale del sistema;
\item[$\bullet$] 
l'entropia $S$ del sistema
\end{enumerate}
tali che
\begin{description}
\item[\textbf{(SMF1)}] 
$\{f,g\} := \langle df,P(dg) \rangle$ è una parentesi di Poisson;
\item[\textbf{(SMF2)}] 
$(f,g) := \langle df,G(dg) \rangle$ è una  parentesi simmetrica semidefinita positiva;
\item[\textbf{(SMF3)}] 
$\forall f \in C^\infty(M),\,
\left\{
\begin{array}{c}
\{S,f\} = 0,\,
(H,f) = 0
\end{array}
\right. $
\end{description}} (cf. \cite{Mor86}).

Come già avvenuto per le strutture di Poisson in \cite{PeCa19} e \cite{CaPe23}, per le strutture di Nambu-Poisson in \cite{PeCa24a} e per le strutture di Dirac in \cite{PeCa24b}, si propone  qui di introdurre la nozione di struttura di Jacobi parziale su varietà convenienti o $c^\infty$-complete secondo Frölicher, Kriegl e Michor (cf. \cite{FrKr88} e \cite{KrMi97}). Tali strutture si collocano nel quadro generale delle strutture parziali compatibili nel contesto conveniente.\\

Va inoltre sottolineato il legame stretto tra una struttura di Jacobi su una varietà $M$ e una struttura di Poisson omogenea sul fibrato lineare $M \times \R$, che permette di ottenere strutture di Jacobi su $M$ tramite la proiezione di una struttura di Poisson omogenea su $M \times \R$.\\

L'articolo è strutturato nel modo seguente.\\
Nella sezione~\ref{__VarietaDiJacobiDiDimensioneFinita} si richiama la nozione di varietà di Jacobi di dimensione finita, illustrata mediante esempi vari e le proprietà essenziali di tali strutture. La nozione di struttura parziale su varietà convenienti fa apparire un sottofibrato debole $T^\flat M$ del fibrato cotangente cinematico $T'M$ e un'algebra $\mathfrak{A}(M)$ di funzioni lisce su $M$. Nella sezione~\ref{__DerivazioniEParentesiDiSchoutenSuTflatM} viene definito la parentesi di Schouten per alcuni tensori antisimmetrici controvarianti, passo necessario alla definizione della nozione di struttura di Jacobi. Questo tipo di struttura è introdotto nella sezione~\ref{__VarietaDiJacobiParziali}, dove se ne studiano anche le proprietà, in particolare la distribuzione caratteristica ad essa associata. La sezione è inoltre arricchita da vari esempi.  
Nell'ultima sezione si propongono diverse direzioni di ricerca legate alla nozione di struttura di Jacobi parziale.

\section{Varietà di Jacobi di dimensione finita}
\label{__VarietaDiJacobiDiDimensioneFinita}

\subsection{Strutture di Jacobi}
\label{___StruttureDiJacobi}

La nozione di \emph{struttura di Jacobi}\index{struttura di Jacobi} su una varietà di dimensione finita $M$ definita in \cite{Kir76} è il dato di un'operazione bilineare
\[
\begin{array}{rccc}
\{.,.\} \colon	& C^\infty(M) \times C^\infty(M)	& \to	
							&	C^\infty(M)				\\
				& (f,g)								& \mapsto
							& \{f,g\}			
\end{array}
\] 
chiamata \emph{parentesi di Jacobi}\index{parentesi  di Jacobi} che soddisfa le seguenti proprietà: 
\begin{enumerate}
\item[(1)]
antisimmetria:\\
\[
\{g,f\} = - \{f,g\}
\]
\item[(2)]
identità di Jacobi:\\
\[
\{f,\{g,h\}\} + \{g,\{h,f\}\} + \{h,\{f,g\}\} = 0
\]
\item[(3)]
è \emph{locale}\index{parentesi locale}, i.e. il supporto\index{supporto} di $\{f,g\}$ è contenuto nell'intersezione dei supporti di $f$ e di $g$.
\end{enumerate}
La coppia $(M,\{.,.\})$ è detta \emph{algebra di Lie locale} \index{algebra di Lie locale}.\\

A. Kirillov ha mostrato che la parentesi di Jacobi può essere espressa come operatore differenziale di ordine al più uno in ciascun argomento.\\
Alloro esistono sulla varietà un campo vettoriale $E$, chiamato \emph{campo vettoriale di Reeb}\index{campo vettoriale di Reeb} e un tensore alternante $2$-contravariante $\Lambda$ definiti in maniera unica  dove, per ogni $f$ e ogni $g$ in $C^\infty(M)$, abbiamo:
\begin{eqnarray}
\label{eq_CrochetJacobi_LambdaR}
\{f,g\} = \Lambda(df,dg) + <f dg - g df,E>
\end{eqnarray}

A. Lichnerowicz ha introdotto il concetto mediante  l'esistenza di tali tensori $E$ e $\Lambda$ che soddisfano le condizioni di compatibilità
\[
\left\lbrace
\begin{array}{rcl}
[\Lambda,\Lambda]	&=&	2E \wedge \Lambda		\\
L_E \Lambda			&=&	0 
\end{array}
\right.
\] 
Ciò corrisponde al fatto che la parentesi (\ref{eq_CrochetJacobi_LambdaR}) soddisfa l'identità di Jacobi.\\

Siano $ \left( M_1,\Lambda_1,E_1 \right) $ e  
$ \left( M_2,\Lambda_2,E_2 \right) $ due varietà di Jacobi dotate rispettivamente delle parentesi
 $ \{.,.\}_{M_1}$ e $ \{.,.\}_{M_2}$.
Un'applicazione liscia $\varphi : M_1 \to M_2$ è una \emph{mappa di Jacobi}\index{mappa!di Jacobi} se l'applicazione indotta
\[
\begin{array}{cccc}
\varphi^\ast :& C^\infty(M_2) &\to &C^\infty(M_1)\\
& f & \mapsto & f \circ \varphi
\end{array}
\]
soddisfa la seguente proprietà:
\begin{equation}
\tag{\textbf{MJ}}
\forall (f,g) \in C^\infty(M_2)^2,\quad
\{ \varphi^\ast(f), \varphi^\ast(g) \}_{M_1}
= \varphi^\ast \left( \{f,g\}_{M_2} \right) .
\end{equation}

\subsection{Esempi}
\label{Ex_EsempiDiStruttureDiJacobi}

\begin{example}
\label{Varietà di Poisson}
{\sf Strutture di Poisson.}\\
Se $E=0$, si ritrova la nozione fondamentale di varietà di Poisson che generalizza quella di struttura simplettica.\\
Una \emph{struttura di Poisson} su una varietà di dimensione finita $M$ consiste nella definizione di una parentesi (applicazione bilineare) sull'algebra $C^\infty(M)$ antisimmetrica, che soddisfa l'identità di Jacobi e la regola di Leibniz:
\[
\{f,g.h\} = \{f,g\}h + g\{f,h\}.
\]
Nel caso di tale struttura, la dinamica è descritta dall'evoluzione temporale di un osservabile $x$ in funzione del tempo. Se $H$ è l'Hamiltoniana del sistema, si ha:
\[
\dot{x}(t) = \{x(t),H(t)\}.
\]
Notiamo che, poiché la parentesi di Poisson è antisimmetrica, si ha in particolare:
\[
\frac{dH}{dt} = \{H,H\} = 0
\]
e quindi l'energia $H$ è conservata.\\

Un esempio fondamentale di tali strutture è quello delle strutture di Lie-Poisson, che rappresentano le strutture di Poisson lineari. Esse svolgono inoltre un ruolo fondamentale nella meccanica Hamiltoniana, ad esempio nelle equazioni di Eulero per il corpo rigido $\mathfrak{so}(3)^\ast$ (cf. \cite{MaRa99}).\\
Sia $M=\mathfrak{g}^\ast$ il duale di un'algebra di Lie $\mathfrak{g}$ di dimensione finita $n$.\\
Per due funzioni $f$ e $g$ di $C^\infty(M)$ si definisce la \emph{parentesi di Lie-Poisson} come
\[
\{f,g\}(\alpha)
=\langle \alpha,[,df_\alpha,dg_\alpha,]\rangle,
\]
dove $\alpha \in \mathfrak{g}^\ast$ e dove si identificano le differenziali $df_\alpha$ e $dg_\alpha$ con elementi di $\mathfrak{g}$.\\
In questo modo si ottiene una struttura di Poisson $P$, chiamata \emph{struttura di Lie-Poisson} oppure \emph{struttura KKS} (Kirillov-Kostant-Souriau). Nella base associata alle coordinate globali $ \left( a^k \right) $ su $\mathfrak{g}^\ast$, essa si esprime come:
\[
P=\displaystyle\sum_{1\leqslant i<j \leqslant n}\sum_{k=1}^n 
c_{ij}^k \alpha_k
\frac{\partial f}{\partial x_i}\frac{\partial g}{\partial x_j}
\]
dove $c_{ij}^k$ sono le costanti di struttura dell'algebra di Lie $\mathfrak{g}$.
\end{example}

\begin{example}
\label{Ex_StructureDeJacobiSurR2m+1}
{\sf Struttura standard di Jacobi su $\R^{2m+1}$.}\\
Consideriamo lo spazio vettoriale $\R^{2m+1}$ munito del sistema di coordinate canoniche 
$ \left( x^0,x^1,\dots ,x^{2m} \right) $.\
Definiamo dunque la struttura standard di Jacobi su questo spazio mediante:
\begin{enumerate}
\item[$\bullet$]
il campo di Reeb:
$E = \dfrac{\partial}{\partial x^0}$
\item[$\bullet$]
il tensore di Jacobi:
$\Lambda
=\displaystyle\sum_{i=1}^m
\left( x^{m+i}\frac{\partial}{\partial x^0}
-\frac{\partial}{\partial x^i}
\right)
\wedge
\frac{\partial}{\partial x^{m+i}}$.
\end{enumerate}
La parentesi di Jacobi associata è quindi definita, per ogni coppia di funzioni lisce $(f,g)$, da
\[
\{f,g\} = \Lambda(df,dg) + f E(g) - g E(f)
\]
la cui espressione in coordinate locali è (cf. \cite{Lic78}):
\[
\{f,g\}
=\displaystyle\sum_{i=1}^m
\left(
\left( x^{m+i}\dfrac{\partial f}{\partial x^0}
-\dfrac{\partial f}{\partial x^i}
\right)
\dfrac{\partial g}{\partial x^{m+i}}
-
\left( x^{m+i}\dfrac{\partial g}{\partial x^0}
-\dfrac{\partial g}{\partial x^i}
\right)
\dfrac{\partial f}{\partial x^{m+i}}
\right) 
+f\dfrac{\partial g}{\partial x^0}
- g\dfrac{\partial f}{\partial x^0}.
\]
Il lettore potrà trovare in \cite{Cab10} una stratificazione delle varietà di Jacobi generiche di dimensione dispari.
\end{example}

\begin{example}
\label{Ex_VarietaCosimplettica}
{\sf Varietà cosimplettica}\\
Una \emph{varietà cosimplettica}\index{varietà!cosimplettica} è una terna $(M,\Omega,\eta)$ dove $M$ è una varietà di dimensione dispari $2m+1$, $\Omega$ è una $2$-forma chiusa e $\eta$ è una $1$-forma chiusa su $M$, tali che
$\eta \wedge \Omega^{m}$ 
sia una forma di volume.\\
Se $\flat : \chi(M) \rightarrow \chi^{\ast}(M)$
è l'isomorfismo di $C^{\infty}(M)$-moduli definito da
\[
\flat(V) = i_{V}\Omega + (i_{V}\eta) \eta ,
\]
il campo vettoriale $E = \flat^{-1}(\eta)$ è il campo di Reeb su $M$; esso è caratterizzato dalle seguenti relazioni: 
\[
i_{E}\Omega = 0 \text{~~e~~} i_{E}\eta = 1.
\] 
In particolare, si ha: 
\[
L_{E}\Omega = 0 \text{~~e~~} L_{E}\eta = 0.
\]
Il $2$-tensore $P$ definito da
\[
P(\alpha,\beta) 
= \Omega\bigl(\flat^{-1}(\alpha),\flat^{-1}(\beta)\bigr)
\]
dota la varietà $M$ di una struttura di Poisson per la quale vale
\[
L_E P= 0.
\]
L'esempio standard di varietà cosimplettica è fornito dal fibrato cotangente esteso 
$ \left( T^\ast N \times \R, \text{d}t, \pi^\ast \Omega \right) $ 
dove $\pi:T^\ast N \times \R \to T^\ast N$ è la   proiezione canonica, e $\Omega$ è la forma simplettica canonica su $T^\ast N$ (cf. \cite{Alb89}).\\
Il quadro simplettico (risp. cosimplettico) modella sistemi hamiltoniani autonomi (risp. dipendenti dal tempo). In entrambi i casi si tratta di sistemi conservativi.
\end{example}

\begin{example}
\label{Ex_VarietaDiContatto}
{\sf Varietà di contatto}\\
Le varietà di contatto costituiscono esempi classici di varietà di Jacobi. Esse trovano applicazione anche come quadro geometrico nella meccanica (cf. \cite{DeLVa19}) e nella termodinamica (cf. \cite{Mru95}) e forniscono un quadro per sistemi non conservativi.\\

Siano $M$ una varietà di dimensione $2m+1$ e $\theta$ una $1$-forma su $M$.
Si dice che $\theta$ è una \emph{forma di contatto}\index{forma!di contatto} se 
$ 
\theta \wedge (\text{d}\theta)^m
$ 
è non nulla in ogni punto. Una \emph{varietà di contatto} è dunque una varietà munita di una forma di contatto.\\
In un intorno di ogni punto esiste un sistema di coordinate (canoniche)\\
$
\left( t,q^1,\dots,q^m,p_1,\dots,p_m \right)
$ 
tale che la forma di contatto si scriva come:
\[
\theta = \text{d}t 
- \displaystyle\sum_{i=1}^{m} p_i  \text{d}q^i.
\]
Una varietà di contatto può essere dotata di una struttura di varietà di Jacobi, in cui il $2$-tensore $\Lambda$ è definito, per tutte le $1$-forme $\alpha$ e $\beta$, da:
\[
\Lambda(\alpha,\beta) 
= \text{d}\alpha 
\big(\flat^{-1}(\alpha), \flat^{-1}(\beta)\big),
\]
dove
$\flat : \mathfrak{X}(M) \to \mathfrak{X}^\ast(M)$ è l'isomorfismo di moduli $C^\infty(M)$ definito da:
\[
\flat(V) = i_V \text{d}\theta + (i_V \theta) \theta.
\]
Il campo di Reeb $E$ associato è caratterizzato dalle relazioni
\[
i_E \theta = 1~~~\text{e}~~~ i_E d\theta = 0.
\]
Nelle coordinate canoniche sopra definite si ottiene allora l'espressione di $\Lambda$ e di $E$:
\[
\left\lbrace
\begin{array}{l}
\Lambda = \displaystyle\sum_{i=1}^{m} \Big( \dfrac{\partial}{\partial q^i} + p_i \dfrac{\partial}{\partial t} \Big) \wedge \dfrac{\partial}{\partial p_i}\\
E = \dfrac{\partial}{\partial t}.
\end{array}
\right.
\]
\end{example}

\begin{example}
\label{Ex_VarietaDiJacobiSulFibratoDei1Jet}
{\sf Varietà di Jacobi sul fibrato dei $1$-jet.}

Sia $E$ un fibrato in linee sulla varietà $M$ di dimensione $n$. Denotiamo con $J^{1}(E)$ il fibrato dei jet di ordine $1$ di $E$ (cf. \cite{Sau89}). Se $  \left( x^i \right) _{1\leqslant i \leqslant n} $ sono coordinate locali su un aperto $U$ della base $M$ e $u$ è una coordinata sulla fibra, allora 
$J^1(E)$ è dotato delle coordinate 
$ \left( x^1,\dots,x^n,u,u_1,\dots,u_n \right) $ 
dove $ u_i=\dfrac{\partial u}{\partial x^i}$.
\\
La \emph{distribuzione di Cartan}\index{distribuzione!di Cartan} è localmente generata sopra $U$ dai campi vettoriali locali
\[
X_{i}=\dfrac{\partial}{\partial x^{i}}
+u_{i}\dfrac{\partial}{\partial u}.
\]
Essa costituisce il nucleo della forma di contatto $\theta$, la cui espressione in queste coordinate locali è
\[
\text{d}u-\displaystyle\sum_{i=1}^{n}u_i 
\text{d}x^i.
\]
L'espressione locale della parentesi di Jacobi, per due funzioni reali $f$ e $g$ di $J^1(E)$, è allora:
\[
\{f,g\}
=
\displaystyle\sum_{i=1}^n
\left(
\dfrac{\partial f}{\partial x^i}
\dfrac{\partial g}{\partial u_i}
- \dfrac{\partial g}{\partial x^i}
\dfrac{\partial f}{\partial u_i}
\right) 
+ f \dfrac{\partial g}{\partial u}
- g \dfrac{\partial f}{\partial u}.
\]
Il lettore troverà un esempio di struttura di Jacobi sullo spazio $J^{\infty}(M)$ dei jet di ordine infinito delle funzioni $f : M \to \R$, dove $M$ è una varietà di dimensione finita, in \cite{LiZh11}.
\end{example}

\subsection{Algebroidi di Jacobi}
\label{___AlgebroidiDeJacobi}

È ben noto che, in dimensione finita, esiste una corrispondenza biunivoca tra le strutture di algebroide di Lie su un fibrato vettoriale $A$ e le strutture di Poisson lineari sul duale $A^\ast$ (cf. \cite{CDW87})\footnote{Una classe importante di algebroidi di Lie è quella dei bialgebroidi di Lie $A$, in cui  $A$ e $A^\ast$ sono dotati di strutture di algebroide di Lie compatibili in un certo senso (cf. \cite{Kos95}). Se $(M,P)$ è una varietà di Poisson di dimensione finita, allora la coppia $ \left( TM,T^\ast M \right) $ è un bialgebroide di Lie. In senso inverso, è stato dimostrato in \cite{MaXu94} che la base di un bialgebroide è una varietà di Poisson.}.\\
Se $M$ è una varietà di Jacobi di dimensione finita, allora il fibrato $T^\ast M$ in generale non può essere dotato di una struttura di algebroide di Lie. Tuttavia, il fibrato dei $1$-jet $T^\ast M \times \R \to M$ ammette una struttura di algebroide di Lie (cf. \cite{KeSo93}).\\
La nozione di algebroide di Jacobi è stata introdotta da J. Grabowski e G. Marmo nel loro articolo \cite{GrMa01}, in cui evidenziano il legame tra strutture di Jacobi e algebroidi di Lie dotati di un cociclo.

\begin{definition}
\label{D_AlgebroideDeJacobi}
Un \emph{algebroide di Jacobi}\index{algebroide di Jacobi} è un algebroide di Lie $(A,[.,.],\rho)$ munito di un $1$-cociclo $\phi \in \Gamma \left( A^\ast \right)$ tale che
\[
[X,fY]=f[X,Y]+(\rho(X)f)Y-\phi(X)fY.
\]
\end{definition}

In \cite{Vit18}, L. Vitagliano definisce questa nozione tramite un fibrato in linee.\\

Una struttura di Jacobi $(\Lambda,E)$ su una varietà $M$ induce naturalmente un algebroide di Jacobi su $T^\ast M \oplus \R$ (cf. \cite{IgMa01}).

\section{Derivazioni e parentesi di Schouten su $T^\flat M$}
\label{__DerivazioniEParentesiDiSchoutenSuTflatM}

Lo scopo di questa sezione è introdurre la parentesi di Schouten su alcune sezioni di un sottofibrato del fibrato cotangente di una varietà conveniente al fine di definire la nozione di varietà di Jacobi parziale.\\

Si fanno ampio uso dei risultati ottenuti nel libro \cite{CaPe23}.\\

Siano $M$ una varietà modellata sul spazio  vettoriale conveniente $\mathbb{M}$ (cf. \cite{KrMi97}, 27.1), $p_{TM}:TM\to M$ su fibrato tangente cinematico (cf. \cite{KrMi97}, 28.12) e $p_{T'M}:T'M\to M$ su fibrato cotangente cinematico (cf. \cite{KrMi97}, 33.1). 

\subsection{L'algebra $\mathfrak{A} (U)$}
\label{__LAlgebramathfrakAU}

\begin{definition}
\label{D_SottoFibratoDebole}
Un sottofibrato $p^{\flat}:T^{\flat}M\to M$ di $p_{M}^{\prime}:T^{\prime}M \to M$ dove $p^{\flat}:T^{\flat}M\to M$ e un fibrato conveniente, e un \emph{sottofibrato debole} \index{debole!sotto-fibrato} di $p_{M}^{\prime}:T^{\prime}M\to M$ se l'iniezione  canonica $\iota:T^{\flat}M \to T^{\prime}M$ è un morfismo de fibrati convenienti.
\end{definition}

Facendo riferimento a \cite {KrMi97}, Definition 48.5, si introduce il seguente insieme.
\begin{definition}
\label{D_AU}
Per ogni aperto $U$ di $M$, si considera l'insieme $\mathfrak{A}(U)$ \index{AmathfrakU@$\mathfrak{A}(U)$} delle funzioni $f \in C^\infty(U)$ tale che, per ogni intero naturale non nullo $k$ e ogni $x$ di $U$, la derivata di ordine $k$ di $f$ in $x$, $d^{k}f(x)\in L_{\operatorname{sym}}^{k}(T_{x}M,\mathbb{R})$ soddisfa:
\begin{equation}
\label{eq_dkf}
\forall (u_2,\dots,u_k) \in (T_xM)^{k-1},\;
d^{k}_xf(.,u_{2},\dots,u_{k}) \in T_{x}^{\flat}M.
\end{equation}
\end{definition}

\begin{proposition}
\label{P_AUalgebra}
Sia $U$ un aperto di $M$.
\begin{enumerate}
\item
L'insieme $\mathfrak{A} (U)$ è una sottoalgebra di $C^\infty(U)$.
\item
Per qualsiasi intero naturale $k$ e tutti i campi vettoriali locali $X_1, \dots ,X_k$ sopra $U$, 
l'applicazione $x \mapsto d^k f(X_1, \dots,X_k)(x)$ appartiene a $\mathfrak{A} (U)$.
\end{enumerate}
\end{proposition}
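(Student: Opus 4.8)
Il piano è dimostrare i due punti con lo stesso strumento, la regola di Leibniz di ordine superiore del calcolo conveniente (i conti concreti sono già svolti in \cite{CaPe23}, cf.\ anche \cite{KrMi97}), applicata sia al prodotto di funzioni sia alla valutazione multilineare limitata $L^{k}_{\operatorname{sym}}(\mathbb{M},\R)\times\mathbb{M}^{k}\to\R$.

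Per il punto~1, la stabilità per combinazioni lineari è immediata, poiché $f\mapsto d^{k}_{x}f$ è lineare e ogni fibra $T^{\flat}_{x}M$ è un sottospazio vettoriale di $T'_{x}M$; le costanti stanno in $\mathfrak{A}(U)$ perché hanno derivate nulle a partire dall'ordine~$1$. Per il prodotto, fissati $f,g\in\mathfrak{A}(U)$, $x\in U$, un intero $m\geq 1$ e vettori $v_{1},u_{2},\dots,u_{m}\in T_{x}M$, si sviluppa $d^{m}_{x}(fg)(v_{1},u_{2},\dots,u_{m})$ come somma finita di termini del tipo $d^{|S|}_{x}f(\,\cdot\,)\cdot d^{m-|S|}_{x}g(\,\cdot\,)$, in cui le variabili si ripartiscono simmetricamente fra i due fattori secondo una bipartizione di $\{1,\dots,m\}$. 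Tenendo fissi $u_{2},\dots,u_{m}$ e lasciando libera $v_{1}$, in ciascun termine la variabile $v_{1}$ cade nel fattore proveniente da $f$ oppure in quello proveniente da $g$: nel primo caso tale fattore è $v\mapsto d^{|S|}_{x}f(v,u_{j_{2}},\dots,u_{j_{|S|}})$ con $|S|\geq 1$, che per~(\ref{eq_dkf}) applicata a $f$ appartiene a $T^{\flat}_{x}M$, mentre l'altro fattore è uno scalare; nel secondo caso si conclude simmetricamente usando $g\in\mathfrak{A}(U)$. Dunque $d^{m}_{x}(fg)(\,\cdot\,,u_{2},\dots,u_{m})$ è somma di multipli scalari di elementi del sottospazio $T^{\flat}_{x}M$, quindi vi appartiene, e $fg\in\mathfrak{A}(U)$.

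Per il punto~2 si procede nello stesso spirito. Anzitutto la simmetria di $d^{k}_{x}f$ mostra che (\ref{eq_dkf}) equivale a richiedere che $v\mapsto d^{k}_{x}f(u_{1},\dots,u_{k-1},v)$ stia in $T^{\flat}_{x}M$ per ogni $u_{1},\dots,u_{k-1}$; di conseguenza, inserendo in $d^{k}f$ campi vettoriali locali in $k-1$ posizioni, la $1$-forma che ne risulta prende valori in $T^{\flat}M$ (e la sua natura liscia si controlla in una banalizzazione locale di $T^{\flat}M$ su un aperto $V$ di $U$, scrivendola in $T'M$ come composizione di $d^{k}f\colon V\to L^{k}_{\operatorname{sym}}(\mathbb{M},\R)$, delle parti principali lisce degli $X_{i}$ e della valutazione multilineare, e usando che $\iota\colon T^{\flat}M\to T'M$ esibisce $T^{\flat}M$ come sottofibrato debole). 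Per l'applicazione $x\mapsto d^{k}f(X_{1},\dots,X_{k})(x)$ con tutte le $k$ posizioni riempite, se ne deriva l'ordine $m$-esimo e si riapplica la regola di Leibniz per la valutazione: fissate $m-1$ direzioni e lasciata libera una, i termini che compaiono sono di due tipi: (i) quelli in cui la direzione libera entra in un fattore $d^{(\cdot)}_{x}f(\,\cdot\,,\dots)$, che stanno in $T^{\flat}_{x}M$ per~(\ref{eq_dkf}) (gli altri fattori essendo scalari); (ii) quelli in cui essa viene assorbita in una derivata $d^{j}_{x}X_{i}$ di uno dei campi, cioè della forma $v\mapsto d^{\ell}_{x}f\bigl(\,\cdot\,,\dots,d^{j}_{x}X_{i}(v,\dots),\dots\bigr)$.

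L'ostacolo principale sarà proprio il controllo dei termini di tipo (ii): che un funzionale $v\mapsto d^{\ell}_{x}f\bigl(\dots,d^{j}_{x}X_{i}(v,\dots),\dots\bigr)$ appartenga a $T^{\flat}_{x}M$ non discende dalla sola~(\ref{eq_dkf}), dato che $T^{\flat}_{x}M$ in generale non è stabile per precomposizione con gli endomorfismi $d^{j}_{x}X_{i}(\,\cdot\,,\dots)$ di $T_{x}M$; per ottenerlo occorre sfruttare ulteriormente la struttura di $T^{\flat}M$ (oppure restringere in modo opportuno la classe dei campi vettoriali ammessi), ed è su questo punto che si concentra il cuore della dimostrazione.
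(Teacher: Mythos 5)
Il punto~1 della tua proposta \`e corretto e completo: la linearit\`a di $f\mapsto d^{k}_{x}f$ e la formula di Leibniz per $d^{m}_{x}(fg)$ riducono tutto al fatto che, in ciascun termine, la variabile lasciata libera compare in uno solo dei due fattori (l'altro essendo uno scalare), sicch\'e $d^{m}_{x}(fg)(\,\cdot\,,u_{2},\dots,u_{m})$ \`e una combinazione lineare di elementi del sottospazio $T^{\flat}_{x}M$. Si noti che per questa proposizione il testo rinvia semplicemente a \cite{CaPe23}, 7.1.1, quindi non c'\`e un argomento interno con cui confrontare il tuo.

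Il punto~2, invece, non \`e dimostrato: la proposta si chiude affermando che il controllo dei termini di tipo (ii) costituisce il cuore della dimostrazione, ma \`e proprio il passo che manca, e non si tratta di una verifica di routine. Tali termini hanno la forma $A^{*}\alpha$ con $\alpha\in T^{\flat}_{x}M$ e $A=d^{j}_{x}X_{i}(\,\cdot\,,\dots)$ endomorfismo limitato di $T_{x}M$; un sottospazio proprio non banale di $T'_{x}M$ non pu\`o essere invariante per gli aggiunti di tutti gli operatori limitati, e infatti l'enunciato, letto alla lettera per tutti i campi vettoriali locali, ammette gi\`a un controesempio in dimensione $2$: su $M=\mathbb{R}^{2}$ con $T^{\flat}M$ generato da $dx^{1}$ si ha $\mathfrak{A}(U)=\{f:\partial f/\partial x^{2}=0\}$, e per $f=x^{1}\in\mathfrak{A}(U)$ e $X=x^{2}\,\partial/\partial x^{1}$ risulta $df(X)=x^{2}\notin\mathfrak{A}(U)$. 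Per chiudere il punto~2 serve dunque un ingrediente ulteriore che la proposta non fornisce: o una restrizione sulla classe dei campi $X_{i}$ (ad esempio che gli operatori $(d^{j}_{x}X_{i}(\,\cdot\,,u))^{*}$ preservino $T^{\flat}_{x}M$, come ci si aspetta per i campi effettivamente usati nel seguito, del tipo $\Lambda^{\sharp}(dg)$), oppure un'ipotesi strutturale su $T^{\flat}M$; in ogni caso il passo va ricondotto all'enunciato preciso di \cite{CaPe23}, 7.1.1, e non pu\`o essere lasciato come osservazione finale.
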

\begin{proof}
cf. \cite{CaPe23}, 7.1.1.
\end{proof}

\subsection{Parentesi di Schouten su $T^\flat M$}
\label{__ParentesiDiSchoutenSuTflatM}

Useremo la definizione della parentesi di Schouten su una varietà di Poisson come data in  \cite{FeMa14}, 1.4, per proporre una generalizzazione della parentesi di Schouten su $T^\flat M$.\\

\begin{definition}
\label{D_Kinematic kAlternatingDerivation} Sia $U$  un aperto di $M$.
\begin{enumerate}
\item[1.]
Se $k\geq 1$, una derivazione $k$-alternante \index{derivazione $k$-alternante} di $\mathfrak{A}(U)$ è un'applicazione $k$-lineare alternante limitata $D:(\mathfrak{A}(U))^k\to \mathfrak{A}(U)$ per cui
\[
\begin{array}{rl}
&D(f_1,\dots,f_{i-1},gh,f_{i+1}\dots, f_k)\\
=& gD(f_1,\dots,f_{i-1},h,f_{i+1}\dots, f_k)+D(f_1,\dots,f_{i-1},g,f_{i+1}\dots, f_k)h
\end{array}
\]
per ogni $i \in \{1,\dots,k\}$ ed ogni $f_1,\dots, f_1,\dots,f_{i-1},g,h,f_{i+1}\dots, f_k$ in $\mathfrak{A}(U)$.
\item[2.]
Una derivazione $k$-alternante $D$ di $\mathfrak{A}(U)$ sarà chiamata \emph{di ordine $1$} se $D(f_1,\dots, f_k)$ dipende soltanto dal $1$-jet di ciascuna $f_i$ per $i \in \{1,\dots,k\}$.
\end{enumerate}
\end{definition}

\begin{description}
\item[$\bullet$]
Lo spazio delle derivazioni $k$-alternanti sarà indicato con $\operatorname{Der}_k \left( \mathfrak{A}(U) \right) $\index{DerkAU@$\operatorname{Der}_k \left( \mathfrak{A}(U) \right) $ (spazio delle derivazioni $k$-alternanti di $\mathfrak{A}(U)$)}.
\item[$\bullet$]
Lo sottospazio delle derivazioni $k$-alternanti di ordine $1$ sarà indicato con $\operatorname{Der}^1_k(\mathfrak{A}(U))$\index{DerkAU1@$\operatorname{Der}_k^1(\mathfrak{A}(U))$ (spazio delle  derivazioni $k$-alternanti di $\mathfrak{A}(U)$ di ordine $1$)} .
\end{description}

\begin{definition}
\label{D_DerivazioneKinematica}
Una derivazione $D\in \operatorname{Der}_{k }(\mathfrak{A}(U))$ di ordine $1$ è chiamata una derivazione $k$-alternante cinematica\index{kAtlternanteDerivazione@derivazione $k$-alternante cinematica} di $\mathfrak{A}(U)$ se, per ogni $f_2,\dots,f_{k}$ fissati in $\mathfrak{A}(U)$, esiste un campo vettoriale $X$ su $U$ tale che:
\begin{equation}
\label{eq_DerivazioneCinematica}
D(f,f_2,\dots,f_k)=df(X)
\end{equation}
per ogni $f\in \mathfrak{A}(U)$.
\end{definition}

\begin{description}
\item[$\bullet$]
Denoteremo con $\mathbf{D}_k(\mathfrak{A}(U))$\index{DkmathfrakAU@$\mathbf{D}_k(\mathfrak{A}(U))$ (insieme delle derivazioni $k$-alternanti cinematiche di $\mathfrak{A}(U)$)} l'insieme delle derivazioni $k$-alternanti cinematiche di $\mathfrak{A}(U)$.
\end{description}

\begin{remark}
In dimensione finita, tutte le derivazioni di $C^\infty(U)$  sono cinematiche (cf. \cite{FeMa14}). Ciò non è più vero per gli spazi di Banach.
\end{remark}

Come in \cite{FeMa14}, 1.4,, introduciamo
\begin{definition}
\label{D_DcircD'}
Se $D \in \operatorname{Der}_{k}(\mathfrak{A}(U))$ e $D' \in \operatorname{Der}_{k' }(\mathfrak{A}(U))$
\begin{enumerate}
\item
si ha
\[
\begin{array}{cl}
	&D\circ D'(f_1,\dots, f_{k'}, f_{k'+1}, \dots ,f_{k+k'-1})\\
	=&\displaystyle\sum_{\sigma}(-1)^{\operatorname{sign}(\sigma)} D\left( D'(f_{\sigma(1)},\dots, f_{\sigma(k')}),f_{\sigma(k'+1)},\dots, f_{\sigma_{(k+k'-1)}}) \right)
\end{array}
\]
per ogni $f_i \in \mathfrak{A}(U)$, $i \in \{ 1,\dots k+k'-1 \} $ dove $\sigma$ corrisponde a tutte le $(k+k'-1)$-uple tale che $\sigma(1)<\cdots<\sigma(k')$ e $\sigma(k'+1)<\cdots<\sigma(k+k'-1)$.
\item
\begin{eqnarray}
\label{eq_ParentesiDerivation}
\index{parentesi!di derivazioni}
[D,D']= D\circ D' -(-1)^{(k-1)(k'-1)} D'\circ D.
\end{eqnarray}
\item[(3)]
Il prodotto esterno  \index{prodotto esterno}  $D\wedge D'$ di $D$ e di $D'$ è definito da:
\begin{eqnarray}
\label{eq_WedgeProduct_DerivativeOperators}
& & D\wedge D'( f_1,\dots, f_{k+k'})\\
& =&\frac{1}{k!}\frac{1}{k'!}\sum_{\sigma} (-1)^{{\rm sign}\sigma}D(f_{\sigma(1)},\dots, f_{\sigma(k)})D'(f_{\sigma(k+1)},\dots,f_{\sigma(k+k')})
\nonumber
\end{eqnarray}
dove $\sigma$ corrisponde a tutte le $(k+k')$-uple tale che $\sigma(1)<\cdots<\sigma(k)$ e $\sigma(k+1)<\cdots<\sigma(k+k')$.
\end{enumerate}
\end{definition}

\begin{proposition}
\label{P_PArentesiDerivation}
Sia $U$ un aperto di $M$. Si ha le seguenti proprietà:
\begin{enumerate}
\item
${Der}_k(\mathfrak{A}(U))$  a una struttura di  $\mathfrak{A}(U)$-modulo e  ${Der}^1_k(\mathfrak{A}(U))$ è un sottomodulo.
\item
Se $D$ appartiene  a $\operatorname{Der}_{k}(\mathfrak{A}(U))$ e $D'$ a $\operatorname{Der}_{k' }(\mathfrak{A}(U))$ allora
$ [D, D']$ appartiene a  $\operatorname{Der}_{(k+k'-1)}(\mathfrak{A}(U))$.
\item
La parentesi $[.,.]$ è $\mathbb{R}$ bilineare su  $\operatorname{Der}(\mathfrak{A}(U))$ e si ha le seguenti proprietà:
\begin{enumerate}
\item[(i)] \hfil{}
$
[D,D']=-(-1)^{(k-1)(k'-1)}[D,D'].
$
\item[(ii)]
(Identità di Jacobi generalizzata)\index{Identità di Jacobi generalizzata}\\
Per ogni $D\in \operatorname{Der}_{k}(\mathfrak{A}(U))$, $D'\in \operatorname{Der}_{k'}(\mathfrak{A}(U))$ e $D''\in \operatorname{Der}_{k''}(\mathfrak{A}(U))$,
\[
\begin{array}{l}
(-1)^{(k-1)(k''-1)}[[D,D'],D'']
+(-1)^{(k'-1)(k-1)}[[D',D''],D]\\
+(-1)^{(k''-1)(k'-1)}[[D'',D],D']=0.
\end{array}
\]
\end{enumerate}
\end{enumerate}
\end{proposition}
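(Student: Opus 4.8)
The plan is to reproduce, in the convenient setting, the classical argument showing that the graded space of multiderivations of a commutative algebra carries a graded Lie bracket, the only extra care being that every construction stays inside $\mathfrak{A}(U)$ and remains bounded. For part (1), given $f\in\mathfrak{A}(U)$ and $D\in\operatorname{Der}_k(\mathfrak{A}(U))$ I set $(fD)(f_1,\dots,f_k):=f\cdot D(f_1,\dots,f_k)$. By Proposition~\ref{P_AUalgebra} the algebra $\mathfrak{A}(U)$ is stable under products, so $fD$ has values in $\mathfrak{A}(U)$; its $k$-linearity, alternation and boundedness are inherited from $D$ and from boundedness of multiplication by $f$, and the Leibniz identity for $fD$ is immediate from that of $D$ since multiplication by $f$ is $\mathfrak{A}(U)$-linear. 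The module axioms are then routine, and if $D\in\operatorname{Der}^1_k(\mathfrak{A}(U))$ the value $(fD)(f_1,\dots,f_k)$ still depends only on the $1$-jets of the $f_i$, so $\operatorname{Der}^1_k(\mathfrak{A}(U))$ is a submodule.

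For part (2), the defining formula exhibits $D\circ D'$ as a finite sum of compositions of bounded multilinear maps, antisymmetrised over the relevant $(k',k-1)$-shuffles; hence it is a bounded alternating $(k+k'-1)$-linear map with values in $\mathfrak{A}(U)$. The substantive point is the Leibniz rule in a fixed slot: substituting a product $gh$ (with $g,h\in\mathfrak{A}(U)$), I split the shuffle sum according to whether the distinguished index falls in the block fed to $D'$ or in the remaining block; applying the Leibniz rule for $D'$ in the first case and for $D$ in the second, and collecting signs, the spurious terms of the form $D(g,\dots)\,D'(\dots)$ cancel pairwise and one is left with $g\,(D\circ D')(\dots,h,\dots)+(D\circ D')(\dots,g,\dots)\,h$. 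This is exactly the computation of \cite{FeMa14}, 1.4 (see also \cite{CaPe23}), and since $\mathfrak{A}(U)$ is an algebra every intermediate function remains in it. Therefore $D\circ D'\in\operatorname{Der}_{k+k'-1}(\mathfrak{A}(U))$, and so does $D'\circ D$, whence $[D,D']\in\operatorname{Der}_{k+k'-1}(\mathfrak{A}(U))$.

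For part (3), $\mathbb{R}$-bilinearity of $[.,.]$ is clear because $D\mapsto D\circ D'$ and $D'\mapsto D\circ D'$ are $\mathbb{R}$-linear (the latter since $D$ is linear in its first slot), and property (i) follows by substituting the definition of the bracket and using $(-1)^{2(k-1)(k'-1)}=1$. For the generalised Jacobi identity (ii) I proceed in two steps. First, I establish a graded pre-Lie-type identity for $\circ$, namely that its associator $\operatorname{as}(D,D',D''):=(D\circ D')\circ D''-D\circ(D'\circ D'')$ is graded symmetric in its last two entries, $\operatorname{as}(D,D',D'')=(-1)^{(k'-1)(k''-1)}\operatorname{as}(D,D'',D')$; this is checked by expanding the double shuffle sums defining $(D\circ D')\circ D''$ and $D\circ(D'\circ D'')$ and matching the terms in which $D'$ and $D''$ act on disjoint blocks of arguments against those in which they act in nested fashion. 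Second, a purely formal sign computation — the graded analogue of the fact that a right pre-Lie product is Lie-admissible — deduces the stated cyclic identity for $[.,.]$ from that symmetry. Nothing in either step uses finite-dimensionality: only that $\mathfrak{A}(U)$ is commutative and that compositions of bounded multilinear maps are bounded, so the finite-dimensional argument of \cite{FeMa14}, 1.4 transfers verbatim.

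The main obstacle is the first step of part (3)(ii): the combinatorial identity for the associator, with its delicate bookkeeping of signs over the ways an $(k+k'+k''-2)$-tuple of arguments is distributed among the three operators. Everything else is either formal or a transcription of the finite-dimensional proof, once Proposition~\ref{P_AUalgebra} guarantees that $\mathfrak{A}(U)$ is closed under all the operations involved.
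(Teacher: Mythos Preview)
The paper does not actually supply a proof of this proposition: it is stated immediately after Definition~\ref{D_DcircD'} and the text then moves on, so the paper is treating it as a standard fact imported from \cite{FeMa14}, 1.4 and \cite{CaPe23}. In that sense your overall strategy---run the classical multiderivation argument and observe that nothing leaves $\mathfrak{A}(U)$ thanks to Proposition~\ref{P_AUalgebra}---is exactly the intended one.

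There is, however, a genuine error in your part (2). You assert that the spurious cross-terms ``of the form $D(g,\dots)\,D'(\dots)$ cancel pairwise'' already in $D\circ D'$, and conclude that $D\circ D'\in\operatorname{Der}_{k+k'-1}(\mathfrak{A}(U))$. This is false. Already for $k=k'=1$ one has $D\circ D'(gh)=g\,D(D'(h))+h\,D(D'(g))+D(g)D'(h)+D(h)D'(g)$, and the last two terms do not cancel: $D\circ D'$ is a second-order operator, not a derivation. What is true is that the obstruction to the Leibniz rule for $D\circ D'$ is (graded) symmetric in $D$ and $D'$, so it disappears only upon forming the graded commutator $[D,D']=D\circ D'-(-1)^{(k-1)(k'-1)}D'\circ D$. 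Your argument for (2) must therefore be rewritten to compute the Leibniz defect of $D\circ D'$, identify it as symmetric under the swap $D\leftrightarrow D'$ with the appropriate sign, and then conclude for $[D,D']$.

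Once (2) is corrected, your treatment of (1) and (3) is fine: the pre-Lie identity for $\circ$ (graded symmetry of the associator in its last two entries) does not require $D\circ D'$ to be a derivation, and from it the graded Jacobi identity follows formally, exactly as in \cite{FeMa14}. So the only real gap is the misplaced cancellation in (2).
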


Il fibrato vettoriale
\[
p_k^\flat : 
L^{k}_{\operatorname{alt}} \left( T^\flat M,\mathbb{R} \right) 
=
\displaystyle\bigcup_{x \in M}
L^{k}_{\operatorname{alt}} \left( T_x^\flat M,\mathbb{R} \right) 
\]
dove $L^{k}_{\operatorname{alt}} \left( T_x^\flat M,\mathbb{R} \right) $ è il spazio vettoriale  de tutte le applicazioni $k$-lineari alternanti limitate $T_x^\flat M \to \mathbb{R}$, è conveniente.\\
Il fibrato vettoriale
\[
q_k^\flat : 
L^{k}_{\operatorname{alt}} \left( T^\flat M,TM \right) 
=
\displaystyle\bigcup_{x \in M}
L^{k}_{\operatorname{alt}} \left( T_x^\flat M,T_x M \right) 
\]
dove $L^{k}_{\operatorname{alt}} \left( T_x^\flat M,T_x M \right) $ è il spazio vettoriale  de tutte le applicazioni $k$-lineari alternanti limitate $T_x^\flat M \to T_x M$, è conveniente.
\begin{enumerate}
\item[$\bullet$]
Il spazio vettoriale delle sezioni locali di $p_k^\flat$ sopra il aperto $U$ se denota con 
$\bigwedge^{k}\Gamma^* \left( T^\flat M_U \right) $.
\item[$\bullet$]
Il spazio vettoriale delle sezioni locali di $q_k^\flat$ sopra il aperto $U$ se denota con 
$\bigwedge^{k}\Gamma^* \left( T^\flat M_U,TM_U \right) $.
\end{enumerate}
L'insieme
\[
\left\{
\bigwedge^{k}\Gamma^* \left( T^\flat M_U,\mathbb{R} \right) ,\; U \textrm{ aperto in } M
\right\}
\]
è un fascio di moduli sul fascio $C^\infty(.)$.\\
Per $k \geq 1$, una sezione $P \in \bigwedge^{k}\Gamma^* \left( T^\flat M_U,\mathbb{R} \right) $ è caratterizzata  dalle valori $P \left( df_1,\dots,df_k \right) $ dove $ \left( f_1,\dots,f_k \right) \in  \left( \mathfrak{A}(U) \right) ^k$.\\

Se $\iota : T^\flat M \to T'M$ è il morfismo di inclusione, allora $\iota^\ast : T''M \to \left( T^\flat M \right)' $ è un morfismo de fibrati.\\
$TM$ è un sottofibrato di $T''M$.

\begin{definition}
\label{D_AmmissibileP}
Sia $U$ un aperto di $M$.
\begin{enumerate}
\item[(i)]
Per $k=1$, un elemento  $\Lambda\in \bigwedge^1\Gamma^*(T^\flat M_U)=\Gamma^*(T^\flat M_U)$ è \emph{ammissibile}\index{derivazione ammissibile}\\
se esiste un campo vettoriale $X$ su $U$ tale che $\Lambda=\iota^* X$.
\item[(ii)]
Per $k\geq 2$, una sezione $\Lambda \in \bigwedge^{k}\Gamma^*(T^\flat M_U )$ è \emph{ammissibile}\\
se esiste  $\Lambda^\sharp\in \bigwedge^{k-1}\Gamma^*(T^\flat M_U,TM_U)$ tale che
\[
\Lambda_x(\alpha_1, \alpha_2,\dots,\alpha_k)=<\alpha_1, \Lambda^\sharp_x( \alpha_2,\dots,\alpha_k)>
\]
per ogni $\alpha_1,\dots, \alpha_k \in T_x^\flat M$.
\end{enumerate}
\end{definition}

\begin{description}
\item[$\bullet$]
Il spazio degli elementi ammissibili di $\bigwedge^{k}\Gamma^* \left( T^\flat M_U \right) $ sarà indicato con $\Gamma^*_k \left( \mathfrak{A}(U) \right) $\index{GammaStarkAU@$\Gamma^*_k \left( \mathfrak{A}(U) \right) $ (spazio degli elementi ammissibili di $\bigwedge^{k}\Gamma^* \left( T^\flat M_U \right) $)}. 
\item[$\bullet$]
Il  spazio delle derivazioni ammissibili di ${\bf D}_k \left( \mathfrak{A}(U) \right) $ sarà indicato con ${\bf D}^*_k \left( \mathfrak{A}(U) \right) $\index{DStarkboldAU@${\bf D}^*_k \left( \mathfrak{A}(U) \right) $ (spazio delle derivazioni  ammissibili di ${\bf D}_k \left( \mathfrak{A}(U) \right) $)}.
\end{description}

\begin{proposition}
\label{P_AsociazioneLambdaD}
Ad ogni $\Lambda\in \Gamma^*_k(\mathfrak{A}(U))$ è associata una derivazione $k$-alternante cinematica $D_\Omega\in {\bf D}_k(\mathfrak{A}(U))$
definita da
\begin{equation}
\label{eq_DLambda}
D_\Lambda (f_1, \dots, f_k)=\Lambda(df_1, \dots, df_k).
\end{equation}
per ogni $ \left( f_1,\dots,f_k \right) \in \mathfrak{A}(U)^k$.\\
\end{proposition}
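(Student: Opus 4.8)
The plan is to check, in order, that formula \eqref{eq_DLambda} yields a well-defined map $\mathfrak{A}(U)^k\to\mathfrak{A}(U)$, that this map is a bounded $k$-alternating derivation of order $1$, and finally that it is kinematic; the single tool doing the real work is the factorization furnished by admissibility of $\Lambda$. For $k\ge 2$ this reads $\Lambda_x(\alpha_1,\dots,\alpha_k)=\langle\alpha_1,\Lambda^\sharp_x(\alpha_2,\dots,\alpha_k)\rangle$ with $\Lambda^\sharp\in\bigwedge^{k-1}\Gamma^*(T^\flat M_U,TM_U)$, while for $k=1$ it reads $\Lambda=\iota^*X$ for a vector field $X$ on $U$, i.e.\ $\Lambda_x(\alpha)=\langle\alpha,X(x)\rangle$.

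First I would settle well-definedness. If $f_1,\dots,f_k\in\mathfrak{A}(U)$, then taking $k=1$ in \eqref{eq_dkf} each $d_xf_i$ lies in $T_x^\flat M$, so $df_i$ is a smooth section of $T^\flat M_U$ and $\Lambda(df_1,\dots,df_k)$ is a well-defined real function on $U$. Putting $X:=\Lambda^\sharp(df_2,\dots,df_k)$ (resp.\ $X$ itself when $k=1$), which is a smooth vector field on $U$ as a composition of smooth sections, the factorization gives
\[
D_\Lambda(f_1,\dots,f_k)=\langle df_1,X\rangle=df_1(X),
\]
and the right-hand side belongs to $\mathfrak{A}(U)$ by Proposition \ref{P_AUalgebra}(2), applied with the single vector field $X$. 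This is the only step that is not purely formal: rather than estimating the iterated derivatives of $D_\Lambda(f_1,\dots,f_k)$ directly, admissibility of $\Lambda$ reduces the membership in $\mathfrak{A}(U)$ to the already-established Proposition \ref{P_AUalgebra}(2). Boundedness of $D_\Lambda$ as a $k$-linear map is then inherited from the boundedness of $f\mapsto df$ on $\mathfrak{A}(U)$ and of the evaluation of the bundle section $\Lambda$, both standard in the convenient calculus of \cite{KrMi97}.

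The remaining properties are formal. $\mathbb{R}$-multilinearity holds because $d$ is additive and $\Lambda$ is fibrewise multilinear; the alternating property is inherited from $\Lambda\in\bigwedge^k\Gamma^*(T^\flat M_U)$, since a permutation of the arguments $f_i$ permutes the differentials $df_i$. For the Leibniz rule in the $i$-th slot one combines $d(gh)=g\,dh+h\,dg$ with the $C^\infty(U)$-linearity of $\Lambda$ in each argument (it being a section of a vector bundle), obtaining
\[
D_\Lambda(f_1,\dots,gh,\dots,f_k)=g\,D_\Lambda(f_1,\dots,h,\dots,f_k)+D_\Lambda(f_1,\dots,g,\dots,f_k)\,h .
\]
Since $D_\Lambda(f_1,\dots,f_k)(x)=\Lambda_x(d_xf_1,\dots,d_xf_k)$ depends only on the differentials, hence on the $1$-jets of the $f_i$, we get $D_\Lambda\in\operatorname{Der}^1_k(\mathfrak{A}(U))$. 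Finally, fixing $f_2,\dots,f_k$ and keeping the vector field $X=\Lambda^\sharp(df_2,\dots,df_k)$ from the well-definedness step, the identity $D_\Lambda(f,f_2,\dots,f_k)=df(X)$ is exactly \eqref{eq_DerivazioneCinematica}, so $D_\Lambda\in\mathbf{D}_k(\mathfrak{A}(U))$; for $k=1$ this condition is vacuous on the fixed arguments and reduces to $D_\Lambda(f)=df(X)$, already shown.
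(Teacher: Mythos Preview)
Your argument is correct. The paper itself gives no proof of this proposition: it is simply stated, in line with the surrounding results (Proposition~\ref{P_AUalgebra} and Theorem~\ref{T_ProprietaDellaParentensiDiSchouten}) whose proofs consist only of a reference to \cite{CaPe23}, 7.1.1. Your write-up spells out exactly the argument one expects from that reference: use the definition of $\mathfrak{A}(U)$ to see that each $df_i$ lands in $T^\flat M$, use admissibility of $\Lambda$ to factor $D_\Lambda(f_1,\dots,f_k)=df_1(X)$ through the vector field $X=\Lambda^\sharp(df_2,\dots,df_k)$, and then invoke Proposition~\ref{P_AUalgebra}(2) for membership in $\mathfrak{A}(U)$; the derivation, order-$1$, and kinematic properties are then formal. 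There is nothing to compare in terms of approach, since the paper defers the proof entirely.
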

L'applicazione
$
\begin{array}[c]{ccc}
\Gamma^*_k(\mathfrak{A}(U))	& \to 	
	& {\bf D}_k(\mathfrak{A}(U))		\\
\Lambda    & \mapsto 	& D_\Lambda
\end{array}
$ 
è iniettiva, ma non suriettiva in generale.

\begin{proposition}
\label{P_Parentesi}
Sia $U$ un aperto di $M$. Allora per $\Lambda \in \Gamma^*_k(\mathfrak{A}_U)$ e $\Omega\in \Gamma^*_l(\mathfrak{A}_U )$,
la parentesi $ [D_\Lambda,D_\Omega]$ è una derivazione $(k+l-1)$-alternante cinematicha di $\mathfrak{A}(U)$ e esiste un unico elemento  $[\Lambda,\Omega]_S\in \Gamma^*_{k+l-1}(\mathfrak{A}_U)$ tale che
\[
D_{[\Lambda,\Omega]_S}=[D_\Lambda,D_\Omega].
\]
\end{proposition}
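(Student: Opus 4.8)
The plan is to run, in the convenient $c^\infty$-framework, the argument of \cite{FeMa14}, 1.4, establishing that the Schouten bracket is well defined, while tracking at each step the two extra constraints of the partial setting: that every auxiliary function stays in $\mathfrak{A}(U)$ (Proposition~\ref{P_AUalgebra}) and that every covector produced stays in the weak subbundle $T^\flat M$ (condition~(\ref{eq_dkf})). By Proposition~\ref{P_PArentesiDerivation}, item 2, we already know $[D_\Lambda,D_\Omega]\in\operatorname{Der}_{k+l-1}(\mathfrak{A}(U))$, so it is a $(k+l-1)$-alternating derivation; what is left is to show it is of order $1$ and kinematic, and that the tensor it induces is admissible.

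The order-$1$ property I would obtain by a symbol computation. Fix $x\in U$ and $f_1\in\mathfrak{A}(U)$ with $d_x f_1=0$ (we reduce to this case by subtracting the constant $f_1(x)$ and using alternation to put the critical slot first). Expanding $D_\Lambda\circ D_\Omega$ and $D_\Omega\circ D_\Lambda$ as in Definition~\ref{D_DcircD'} and using $D_\Lambda(g_1,\dots,g_k)=\Lambda(dg_1,\dots,dg_k)$: every summand in which $f_1$ sits as a direct argument of the outer operator vanishes at $x$ because that operator is linear in the slot and $d_x f_1=0$; in every remaining summand $f_1$ sits inside the inner operator, and when one differentiates $\Omega(\dots,df_1,\dots)$ (or $\Lambda(\dots,df_1,\dots)$) at $x$ only the term where the derivative falls on $df_1$ survives, producing $\Lambda_x$ or $\Omega_x$ evaluated on the partial contraction $d^2_x f_1(\,\cdot\,,u)$ together with the surviving $d_x f_i$. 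Here it is essential that $d^2_x f_1(\,\cdot\,,u)\in T^\flat_x M$ by~(\ref{eq_dkf}), so that these expressions are legitimate arguments of $\Lambda_x$ and $\Omega_x$; in fact the whole inner differential equals the differential of a function of $\mathfrak{A}(U)$, hence lands in $T^\flat_x M$. The two resulting finite sums of second-order terms differ exactly by the factor $(-1)^{(k-1)(l-1)}$ — the same combinatorial identity that makes the second symbol of the finite-dimensional Schouten bracket vanish — so they cancel in $[D_\Lambda,D_\Omega](f_1,\dots,f_{k+l-1})(x)$, which is therefore $0$. Thus this value depends only on $d_x f_1,\dots,d_x f_{k+l-1}\in T^\flat_x M$, i.e.\ $[D_\Lambda,D_\Omega]\in\operatorname{Der}^1_{k+l-1}(\mathfrak{A}(U))$, and $[\Lambda,\Omega]_{S,x}(d_x f_1,\dots,d_x f_{k+l-1}):=[D_\Lambda,D_\Omega](f_1,\dots,f_{k+l-1})(x)$ is a well-defined bounded alternating section $[\Lambda,\Omega]_S\in\bigwedge^{k+l-1}\Gamma^*(T^\flat M_U)$ (every $\alpha\in T^\flat_x M$ arises as some $d_x f$ with $f\in\mathfrak{A}(U)$, since in a chart an affine function whose differential is $\alpha$ belongs to $\mathfrak{A}(U)$).

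The step I expect to be the main obstacle is showing that $[\Lambda,\Omega]_S$ is admissible, equivalently that $[D_\Lambda,D_\Omega]$ is kinematic with a sharp map valued in $TM$. Redoing the previous expansion without assuming $d_x f_1=0$ and now keeping also the first-order-in-$f_1$ terms — those in which the derivative falls on $\Lambda$, on $\Omega$, or on one of the other $df_i$, each carrying a factor $d_x f_1$ — one can collect, for fixed $f_2,\dots,f_{k+l-1}$, the expression $[D_\Lambda,D_\Omega](f_1,f_2,\dots,f_{k+l-1})$ in the form $df_1(X)$, where $X$ is a local vector field on $U$ built from $\Lambda^\sharp$, $\Omega^\sharp$, their derivatives and the Lie brackets of the vector fields $\Lambda^\sharp(df_{i_2},\dots)$ and $\Omega^\sharp(df_{j_2},\dots)$ — formally the same expression as in the finite-dimensional Schouten calculus. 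This shows $[D_\Lambda,D_\Omega]\in{\bf D}_{k+l-1}(\mathfrak{A}(U))$; and since $X$ depends on $f_2,\dots,f_{k+l-1}$ only through their $1$-jets and is $\R$-multilinear and alternating in them, it defines a section $[\Lambda,\Omega]_S^\sharp\in\bigwedge^{k+l-2}\Gamma^*(T^\flat M_U,TM_U)$ with $[\Lambda,\Omega]_{S,x}(\alpha_1,\dots,\alpha_{k+l-1})=\langle\alpha_1,[\Lambda,\Omega]_{S,x}^\sharp(\alpha_2,\dots,\alpha_{k+l-1})\rangle$. Hence $[\Lambda,\Omega]_S\in\Gamma^*_{k+l-1}(\mathfrak{A}(U))$ and $D_{[\Lambda,\Omega]_S}=[D_\Lambda,D_\Omega]$.

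The delicate points throughout are the sign and index bookkeeping in these expansions, the verification that every intermediate function stays in $\mathfrak{A}(U)$ and every intermediate covector in $T^\flat M$ (again through Proposition~\ref{P_AUalgebra} and~(\ref{eq_dkf})), and the boundedness of all the multilinear maps involved, so that the whole construction remains within the convenient category. Finally, uniqueness of $[\Lambda,\Omega]_S$ is immediate from the injectivity of the map $\Lambda\mapsto D_\Lambda$ recorded just after Proposition~\ref{P_AsociazioneLambdaD}: at most one admissible $(k+l-1)$-tensor can induce the derivation $[D_\Lambda,D_\Omega]$.
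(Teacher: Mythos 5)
The paper itself does not prove this proposition in the text: as with the neighbouring results, the proof is delegated to \cite{CaPe23}, 7.1.1, where the argument for the partial Poisson case is the classical one — the second-order symbols of $D_\Lambda\circ D_\Omega$ and of $(-1)^{(k-1)(l-1)}D_\Omega\circ D_\Lambda$ coincide and cancel, and the surviving first-order part assembles into an admissible tensor. Your proposal reconstructs exactly this route, and correctly isolates the two points where the partial setting actually intervenes: $d^2_xf(\cdot,u)\in T^\flat_xM$ by (\ref{eq_dkf}), so the contracted second derivatives are legitimate arguments of $\Lambda_x$ and $\Omega_x$; and $\Omega(df_1,\dots,df_l)\in\mathfrak{A}(U)$, so the composition of the two derivations is defined at all. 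The strategy is therefore the intended one.

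One sub-step, however, is justified incorrectly. To pass from ``$[D_\Lambda,D_\Omega](f_1,\dots,f_{k+l-1})(x)$ depends only on $d_xf_1,\dots,d_xf_{k+l-1}$'' to a well-defined element of $\bigwedge^{k+l-1}\Gamma^*\left(T^\flat M_U\right)$, you claim that every $\alpha\in T^\flat_xM$ arises as $d_xf$ for an affine function $f\in\mathfrak{A}(U)$. But membership in $\mathfrak{A}(U)$ requires $d_yf\in T^\flat_yM$ for \emph{every} $y\in U$, and the fibre $T^\flat_yM\subset T'_yM$ varies with $y$: an affine function in a chart has constant differential $\alpha$, which will in general leave $T^\flat_yM$ at points $y\neq x$. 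In the convenient setting one cannot repair this by cutting off near $x$, since bump functions may not exist (the paper's own Osservazione~\ref{R_LocalisazioneVarietaDiJJacobiarziali} makes this point). The correct move is to invoke the convention stated just before Definizione~\ref{D_AmmissibileP}: a section of $p_{k+l-1}^\flat$ is \emph{characterized} by its values on tuples $\left(df_1,\dots,df_{k+l-1}\right)$ with $f_i\in\mathfrak{A}(U)$, so well-definedness on such tuples is, by the paper's own setup, exactly what must be (and all that can be) checked. With that substitution, and granting the boundedness and smoothness bookkeeping that you explicitly defer, the argument goes through; the uniqueness claim via injectivity of $\Lambda\mapsto D_\Lambda$ is fine.
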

El elemento $[\Lambda,\Omega]_S$ di $\Gamma^*_{k+l-1}(\mathfrak{A}_U)$ è chimato \emph{parentesi di Shouten}\index{parentesi!di Scouthen} di $\Lambda$ e $\Omega$.

\begin{theorem}
\label{T_ProprietaDellaParentensiDiSchouten}
La parentesi di Schouten ha le seguenti proprietà:
\begin{enumerate}
\item
Per ogni campi vettoriali $X$ e $Y$ su $U$, $ \iota^*X$ and $\iota^*Y$ appartiene  a$\Gamma^*_1(\mathfrak{A}(U))$ e si ha
\[
\iota^*[X,Y]=[\iota^*X,\iota^*Y]_S.
\]
\item
Per ogni $\Omega \in \Gamma^*_k \left( \mathfrak{A}(U) \right) $ e $\Phi\in \Gamma^*_h \left( \mathfrak{A}(U) \right) $,
\[
[\Omega, \Phi]_S=-(-1)^{(k-1)(h-1)}[\Phi, \Omega]_S.
\]
\item
Per ogni  $\Omega \in \Gamma^*_k \left( \mathfrak{A}(U) \right) $,  $\Phi\in \Gamma^*_h \left( \mathfrak{A}(U) \right) $ e $\Psi \in \Gamma^*_l \left( \mathfrak{A}(U) \right) $,
\[
[\Omega,\Phi\wedge \Psi]_S=[\Omega, \Phi]_S\wedge \Psi+(-1)^{(k-1)h}\Omega\wedge [\Phi, \Psi]_S.
\]
\item
Per ogni  $\Omega \in \Gamma^*_k \left( \mathfrak{A}(U) \right) $,  $\Phi\in \Gamma^*_h \left( \mathfrak{A}(U) \right) $ e $\Psi\in  \Gamma^*_l \left( \mathfrak{A}(U) \right) $,
\[
(-1)^{(k-1)(l-1)}[\Omega,[\Phi, \Psi]_S]_S+(-1)^{(h-1)(k-1)}[\Phi,[ \Psi, \Omega]_S]_S+(-1)^{(l-1)(h-1)}[\Psi,[ \Omega, \Phi]_S]_S=0.
\]
[Identità di Jacobi generalizzata]\index{identità!di Jacobi generalizzata}.
\item
Se $X_1\wedge\cdots\wedge X_k$ e  $Y_1\wedge\cdots\wedge Y_h$ sono multivettori, allora $\iota^*(X_1\wedge\cdots\wedge X_k)$ e $\iota^*(Y_1\wedge\cdots\wedge Y_h)$ appartengono a $\Gamma_k^*(\mathfrak{A}(U))$  e $\Gamma_h^*(\mathfrak{A}(U))$ rispettivamente e si ha
\[
\begin{array}{cl}
&[\iota^*(X_1\wedge\cdots\wedge X_k),\iota^*(Y_1\wedge\cdots\wedge Y_h)]_S\\
=&
\iota^*
\left( \displaystyle \sum_{i,j}(-1)^{i+j}[X_i,Y_j]X_1\wedge\cdots\wedge\widehat{X_i}\wedge\cdots\wedge X_k\wedge Y_1\wedge\cdots\wedge \widehat{Y_j}\wedge\cdots\wedge Y_h \right)
\end{array}
\]
\end{enumerate}
\end{theorem}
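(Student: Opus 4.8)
The proof rests on a single structural observation: the assignment $\Lambda \mapsto D_\Lambda$ of Proposition~\ref{P_AsociazioneLambdaD} is $\R$-linear and injective; it sends exterior products to exterior products of derivations --- one checks directly from (\ref{eq_DLambda}) and (\ref{eq_WedgeProduct_DerivativeOperators}) that $\Phi\wedge\Psi$ is again admissible, with $(\Phi\wedge\Psi)^\sharp$ built from $\Phi,\Phi^\sharp,\Psi,\Psi^\sharp$, and that $D_{\Phi\wedge\Psi}=D_\Phi\wedge D_\Psi$ --- and, by Proposition~\ref{P_Parentesi}, it intertwines $[\cdot,\cdot]_S$ with $[\cdot,\cdot]$. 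Hence each of the five identities follows from the corresponding identity for the operators $D_\bullet$, after cancelling the injective map. In particular, identities (2) and (4) are then nothing but the graded antisymmetry and the generalized Jacobi identity of Proposition~\ref{P_PArentesiDerivation}(3), read through $D_{[\Omega,\Phi]_S}=[D_\Omega,D_\Phi]$.

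For (1) I would argue directly. By Definition~\ref{D_AmmissibileP}(i) one has $\iota^*X,\iota^*Y\in\Gamma^*_1(\mathfrak{A}(U))$, and $D_{\iota^*X}(f)=df(X)=X(f)$, which lies in $\mathfrak{A}(U)$ by Proposition~\ref{P_AUalgebra}(2). Definition~\ref{D_DcircD'} with $k=k'=1$ gives $D_{\iota^*X}\circ D_{\iota^*Y}(f)=X(Y(f))$, whence
\[
[D_{\iota^*X},D_{\iota^*Y}](f)=X(Y(f))-Y(X(f))=[X,Y](f)=D_{\iota^*[X,Y]}(f).
\]
Proposition~\ref{P_Parentesi} identifies the left-hand side with $D_{[\iota^*X,\iota^*Y]_S}$, and injectivity yields $[\iota^*X,\iota^*Y]_S=\iota^*[X,Y]$.

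The only point not already encapsulated in Proposition~\ref{P_PArentesiDerivation} is the graded Leibniz rule (3). I would first establish its counterpart at the level of derivations,
\[
[D,D'\wedge D'']=[D,D']\wedge D''+(-1)^{(k-1)h}\,D'\wedge[D,D''],\qquad D\in\operatorname{Der}_k,\ D'\in\operatorname{Der}_h,\ D''\in\operatorname{Der}_l,
\]
which is the statement that $(\operatorname{Der}_\bullet(\mathfrak{A}(U)),\wedge,[\cdot,\cdot])$ carries a Gerstenhaber-type structure. The proof expands $D\circ(D'\wedge D'')$ and $(D'\wedge D'')\circ D$ via Definition~\ref{D_DcircD'}, splits each shuffle sum according to whether the distinguished argument (respectively the output of $D$) is fed to the $D'$-block or to the $D''$-block, uses the derivation property of $D$ in its first slot, and recognises the resulting shuffle sums as $(D\circ D')\wedge D''$, $D'\wedge(D\circ D'')$ and their transposes; collecting terms and tracking the signs $(-1)^{(k-1)(\cdot)}$ produces the displayed identity. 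This bookkeeping is the main --- indeed the only genuinely computational --- obstacle. Granting it, apply it to $D_\Omega,D_\Phi,D_\Psi$, use $D_{\Phi\wedge\Psi}=D_\Phi\wedge D_\Psi$ together with $[D_\Omega,D_\Phi]\wedge D_\Psi=D_{[\Omega,\Phi]_S}\wedge D_\Psi$, and cancel $D_\bullet$; injectivity simultaneously shows that $[\Omega,\Phi]_S\wedge\Psi$ and $\Omega\wedge[\Phi,\Psi]_S$ lie in $\Gamma^*_{k+h+l-1}(\mathfrak{A}(U))$.

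Finally, (5) follows from (1)--(3) by induction on $k+h$. Since $\iota^*$ commutes with exterior products, $\iota^*(X_1\wedge\cdots\wedge X_k)=\iota^*X_1\wedge\cdots\wedge\iota^*X_k\in\Gamma^*_k(\mathfrak{A}(U))$, and likewise for the $Y_j$. The base case $k=h=1$ is (1). For the inductive step one may assume $k\ge 2$ (otherwise exchange the two arguments via (2)); writing $X_1\wedge\cdots\wedge X_k=X_1\wedge(X_2\wedge\cdots\wedge X_k)$ and applying the Leibniz rule (3) --- in the first-argument form obtained by combining (2) and (3) --- peels off the factor $\iota^*X_1$. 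The inductive hypothesis, applied to the two brackets that appear (each of total degree $<k+h$), together with (1) for the elementary brackets $[\iota^*X_1,\iota^*Y_j]_S=\iota^*[X_1,Y_j]$, reconstitutes the right-hand side of (5) once the wedge factors are reordered and the signs $(-1)^{i+j}$ are assembled; this last step is routine.
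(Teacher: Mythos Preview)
Your argument is correct and follows the natural route: transfer each identity from the bracket of derivations (Proposition~\ref{P_PArentesiDerivation} and the Gerstenhaber--Leibniz rule you establish) to $[\cdot,\cdot]_S$ via the injective intertwiner $\Lambda\mapsto D_\Lambda$ of Propositions~\ref{P_AsociazioneLambdaD} and~\ref{P_Parentesi}. The paper itself does not give a proof here; it simply refers the reader to \cite{CaPe23},~7.1.1, so there is no independent argument to compare against. Your sketch is essentially what one expects to find behind that citation, and the only substantive computation you flag---the shuffle bookkeeping for the graded Leibniz rule $[D,D'\wedge D'']=[D,D']\wedge D''+(-1)^{(k-1)h}D'\wedge[D,D'']$---is indeed the crux and is handled correctly in outline.
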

\begin{proof}
cf. \cite{CaPe23}, 7.1.1.
\end{proof}

\section{Varietà di Jacobi parziali}
\label{__VarietaDiJacobiParziali}

Il concetto di varietà di Jacobi parziale introdotta qui costituisce una generalizzazione della nozione  di varietà di Jacobi in dimensione finita e di varietà di Poisson parziale conveniente, definita da F. Pelletier in \cite{PeCa19}.\\

In questa sezione, $M$ è una varietà modellata sul spazio  vettoriale conveniente $\mathbb{M}$, $p_{TM}:TM\to M$ è su fibrato tangente cinematico e $p_{T'M}:T'M\to M$ su fibrato cotangente cinematico.\\
Sia $p^{\flat}:T^{\flat}M\to M$ un sottofibrato debole di $p_{TM'}:T'M \to M$. Consideriamo l'iniezione  canonica $\iota:T^{\flat}M \to T'M$ e $\iota^\ast : T''M \to \left( T^\flat M \right)' $ che sono morfismi de fibrati convenienti.\\ 
Sia, per ogni aperto $U$ di $M$, $\mathfrak{A} (U)$ la sottoalgebra di $C^\infty(U)$ definita in $\S$~\ref{__ParentesiDiSchoutenSuTflatM}.\\
Consideriamo anche la parentesi di Schouten su $\displaystyle\bigcup_{k \in \mathbb{N}^\ast}\Gamma^*_k(\mathfrak{A}_U)$.
 
\subsection{Definizioni}

\begin{definition}
\label{D_VarietaDiJacobiParziale}
La terna $(M,\Lambda,X)$ dove $\Lambda \in \Gamma^*_k \left( \mathfrak{A}(U) \right) $ e $X$ è un campo vettoriale è una \emph{varietà di Jacobi parziale}\index{varietà!di Jacobi parziale} se $\Lambda$ e $X$ soddisfano le seguenti proprietà:
\begin{description}
\item[\textbf{(VJp1)}]
{$\hfil 
[\Lambda,\Lambda]_S = 2 \iota^\ast(X) \wedge \Lambda$
}
\item[\textbf{(VJp2)}]
{$\hfil
L_X \Lambda = 0$
}
\end{description}
\end{definition}

\begin{remark}
\label{R_LocalisazioneVarietaDiJJacobiarziali}
Un problema che va sottolineato è che, a differenza del contesto a dimensione finita, una funzione liscia locale su una varietà conveniente $M$ non necessariamente si estende a una funzione globale su $M$ se non esistono alcuni tipi di funzioni a supporto compatto (\textit{bump functions}).\\ 
Pertanto, l'algebra $C^\infty(M)$ delle funzioni lisce su $M$, ristretta a un aperto $U$, può essere strettamente contenuta nell'algebra $C^\infty(U)$ delle funzioni lisce su $U$.\\
Una situazione analoga si presenta anche nel contesto delle varietà di Banach.\\
Poiché molti esempi classici di varietà convenienti non ammettono tali funzioni, la nozione di struttura di Jacobi parziale avrebbe potuto essere definita su insiemi di funzioni lisce definite su aperti di $M$ (cf. \cite{PeCa24a}, dove viene sollevato questo problema nel caso delle strutture di Nambu-Poisson).
\end{remark}

Se $(M,\Lambda,X)$ è una varietà di Jacobi, la \emph{parentesi di Jacobi}\index{parentesi!di Jacobi} $\{.,.\}$ è definita per ogni coppia $ \left( f,g \right) $ di funzioni di $\mathfrak{A}(M)$ da
\[
\{f,g\} = \Lambda(df,dg) + f X(g) - g X(f)
\] 
Questa applicazione è bilineare, antisimmetrica e soddisfa l'identità di Jacobi.\\

\smallskip
In considerazione dell'osservazione~\ref{R_LocalisazioneVarietaDiJJacobiarziali}, ci poniamo nella situazione in cui la parentesi è localizzabile, cioè per tutti gli aperti $U$ e $V$ di $M$, si ha:

$(\{.,\dots,.\}_U)_{| U\cap V}=(\{.,\dots,.\}_V)_{| U\cap V}=\{.,\dots,.\}_{ U\cap V}$

$(\{.,\dots,.\}_{ U\cap V})_{| U}=\{.,\dots,.\}_U\;\;\; (\{.,\dots,.\}_{ U\cap V})_{| V}=\{.,\dots,.\}_V$.\\
\smallskip

Poiché $\Lambda \in
\Gamma^*_2(\mathfrak{A}(M)) $, esiste 
$\Lambda^\sharp \in \bigwedge^1 \Gamma^*(T^\flat M_U,TM_U)$ tale che, per ogni $x \in M$
\[
\Lambda_x(\alpha,\beta)
=<\alpha, \Lambda^\sharp_x(\beta)>
\]
La parentesi risulta dunque:
\[
\{f,g\} = <\alpha, \Lambda^\sharp_x(\beta)> 
+ f X(g) - g X(f).
\]
In base alla Proposizione~\ref{P_AUalgebra}, 2., abbiamo $\{f,g\} \in \mathfrak{A}(U)$.\\  

A ogni funzione $f$ di $\mathfrak{A}(M)$, si può associare il campo di vettori Hamiltoniano
\[
X_f = \Lambda^\sharp(df) + fX
\]
In particolare, $X_1 = X$.\\

L'applicazione $f \mapsto X_f $ è un morfismo di algebre di Lie:
\begin{proposition}
\label{P_MorfismoDiAlgebreDiLie}
Per ogni coppia $(f,g)$ di funzioni di $\mathfrak{A}(M)$, abbiamo~:
\[
\label{eq_MorfismoDiAlgebreDiLie}
[X_f,X_g] = X_{\{f,g\}}
\]
\end{proposition}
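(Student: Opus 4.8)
The plan is to prove the identity directly, as an equality of kinematic vector fields, by expanding the Lie bracket $[X_f,X_g]$ and reducing it to the two defining conditions \textbf{(VJp1)} and \textbf{(VJp2)}. First I would check that $X_f=\Lambda^\sharp(df)+fX$ really is a vector field: since $f\in\mathfrak{A}(M)$ one has $d_xf\in T_x^\flat M$ for every $x$, so $\Lambda^\sharp(df)$ is a well-defined section of $TM$; moreover $X(f)=\{1,f\}$ belongs again to $\mathfrak{A}(M)$ by Proposition~\ref{P_AUalgebra}, so objects such as $\Lambda^\sharp\!\bigl(d(Xf)\bigr)$ make sense as well. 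Then, by bilinearity and the Leibniz rule $[V,hW]=h[V,W]+V(h)\,W$ for vector fields,
\[
[X_f,X_g]=[\Lambda^\sharp(df),\Lambda^\sharp(dg)]+[\Lambda^\sharp(df),gX]+[fX,\Lambda^\sharp(dg)]+[fX,gX].
\]

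Next I would dispose of the last three terms using \textbf{(VJp2)}. The term $[fX,gX]$ reduces, via Leibniz and $[X,X]=0$, to $\bigl(fX(g)-gX(f)\bigr)X$. For the two mixed terms, Leibniz brings in $[\Lambda^\sharp(df),X]$ and $[X,\Lambda^\sharp(dg)]$; since $L_X$ is a derivation with respect to contraction, $L_X\!\bigl(\Lambda^\sharp(dg)\bigr)=(L_X\Lambda)^\sharp(dg)+\Lambda^\sharp\!\bigl(L_X\,dg\bigr)$, and \textbf{(VJp2)} kills the first summand, whence $[X,\Lambda^\sharp(dg)]=\Lambda^\sharp(d(Xg))$ and $[\Lambda^\sharp(df),X]=-\Lambda^\sharp(d(Xf))$. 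Expanding the surviving products of functions and applying the Leibniz rule for $\Lambda^\sharp\!\circ d$, these two mixed terms together become $\Lambda^\sharp\!\bigl(d(fX(g)-gX(f))\bigr)$ plus correction terms proportional to $\Lambda^\sharp(df)$, $\Lambda^\sharp(dg)$ and $X$, with explicit scalar coefficients $X(f)$, $X(g)$ and $\Lambda(df,dg)$.

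The main term $[\Lambda^\sharp(df),\Lambda^\sharp(dg)]$ is where \textbf{(VJp1)} enters, and this is the step I expect to be the main obstacle. From Definition~\ref{D_DcircD'} and Proposition~\ref{P_Parentesi} applied to $D_\Lambda$ — essentially unwinding $D_\Lambda\circ D_\Lambda$ — I would establish the convenient/partial analogue of the classical ``defect'' formula
\[
[\Lambda^\sharp(df),\Lambda^\sharp(dg)]=\Lambda^\sharp\!\bigl(d(\Lambda(df,dg))\bigr)-\tfrac12\,\iota_{dg}\,\iota_{df}\,[\Lambda,\Lambda]_S ,
\]
where $\iota_{dg}\iota_{df}[\Lambda,\Lambda]_S$ denotes the vector field $h\mapsto[\Lambda,\Lambda]_S(df,dg,dh)$. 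Substituting \textbf{(VJp1)}, $[\Lambda,\Lambda]_S=2\,\iota^\ast(X)\wedge\Lambda$, and computing the double contraction $\iota_{dg}\iota_{df}\bigl(\iota^\ast(X)\wedge\Lambda\bigr)$ produces $X(f)\,\Lambda^\sharp(dg)-X(g)\,\Lambda^\sharp(df)+\Lambda(df,dg)\,X$, so that $[\Lambda^\sharp(df),\Lambda^\sharp(dg)]=\Lambda^\sharp(d(\Lambda(df,dg)))-X(f)\Lambda^\sharp(dg)+X(g)\Lambda^\sharp(df)-\Lambda(df,dg)X$. The delicate points here are the bookkeeping of signs and of the combinatorial normalizations — the factor in the wedge product of Definition~\ref{D_DcircD'} must be matched consistently against the factor $2$ in \textbf{(VJp1)} and the $\tfrac12$ in the defect formula — and checking that every multivector and contraction occurring stays within the admissible subspaces $\Gamma^\ast_k(\mathfrak{A}(U))$, so that $[\Lambda,\Lambda]_S$ really is the Schouten bracket supplied by Proposition~\ref{P_Parentesi}.

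Finally I would add the four contributions. The terms of type $\Lambda^\sharp(d(\,\cdot\,))$ combine, by one last use of the Leibniz rule for $\Lambda^\sharp\circ d$, into $\Lambda^\sharp\!\bigl(d(\Lambda(df,dg)+fX(g)-gX(f))\bigr)=\Lambda^\sharp(d\{f,g\})$; the corrections carrying $\Lambda^\sharp(df)$ and $\Lambda^\sharp(dg)$ cancel in pairs; and the terms carrying $X$ as a factor collect to $\{f,g\}\,X$. Hence $[X_f,X_g]=\Lambda^\sharp(d\{f,g\})+\{f,g\}X=X_{\{f,g\}}$, which is the assertion. In view of Remark~\ref{R_LocalisazioneVarietaDiJJacobiarziali} I would also note that the whole computation is local and compatible with restriction to opens, so that it may legitimately be carried out over an arbitrary open $U$ with $\mathfrak{A}(U)$ in place of $\mathfrak{A}(M)$.
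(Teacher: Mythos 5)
The paper states Proposition~\ref{P_MorfismoDiAlgebreDiLie} without any proof, so there is nothing to compare your argument against; what you propose is the standard direct computation, and it is correct and would serve as the missing proof. Your decomposition of $[X_f,X_g]$ into four brackets, the treatment of $[fX,gX]$ and of the mixed terms via \textbf{(VJp2)} (through $L_X\Lambda=0$, hence $[X,\Lambda^\sharp(dg)]=\Lambda^\sharp(d(Xg))$), and the use of the defect formula together with \textbf{(VJp1)} for the term $[\Lambda^\sharp(df),\Lambda^\sharp(dg)]$ all assemble correctly into $\Lambda^\sharp(d\{f,g\})+\{f,g\}X$; I checked the bookkeeping and the corrections do recombine as you claim. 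You rightly identify the defect formula as the only step needing the partial-setting machinery: it is not an extra lemma to be proved from scratch but essentially the definition of $[\Lambda,\Lambda]_S$ in Proposition~\ref{P_Parentesi}, since $D_{[\Lambda,\Lambda]_S}=[D_\Lambda,D_\Lambda]=2\,D_\Lambda\circ D_\Lambda$ evaluated on $(f,g,h)$ is exactly the Jacobiator-type expression whose contraction with $df,dg$ measures $[\Lambda^\sharp(df),\Lambda^\sharp(dg)]-\Lambda^\sharp(d(\Lambda(df,dg)))$; the closure facts you need ($\Lambda(df,dg)\in\mathfrak{A}$, $X(f)\in\mathfrak{A}$, $d\{f,g\}\in T^\flat M$) are supplied by Proposition~\ref{P_AUalgebra}(2), as the paper itself notes when asserting $\{f,g\}\in\mathfrak{A}(U)$. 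One small caveat you should make explicit: with the paper's stated convention $\Lambda_x(\alpha,\beta)=\langle\alpha,\Lambda^\sharp_x(\beta)\rangle$ one gets $\langle dg,\Lambda^\sharp(df)\rangle=\Lambda(dg,df)=-\Lambda(df,dg)$, which would flip several of your signs; the computation comes out as you wrote it only with the convention $\langle\beta,\Lambda^\sharp(\alpha)\rangle=\Lambda(\alpha,\beta)$, so fix that convention (or propagate the sign) at the outset.
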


\begin{definition}
\label{D_MappaDiJacobi}
Siano $ \left( M_1,\Lambda_1,X_1 \right) $ e $ \left( M_2,\Lambda_2,X_2 \right) $ due varietà parziali di Jacobi.\\
Un'applicazione liscia è una \emph{mappa di Jacobi}\index{mappa!di Jacobi} se la mappa indotta 
$\varphi^\ast : C^\infty  \left( M_2 \right) \to C^\infty \left( M_1 \right) $ definita da 
$\varphi^\ast (f) = f \circ \varphi$ 
soddisfa le proprietà seguenti:
\[
\left\{
\begin{array}
[c]{c}
\varphi^\ast  \left( \mathfrak{A} \left( M_2 \right)  \right)
\subset
\mathfrak{A}  \left( M_1 \right) 
 \\
\forall (f,g) \in \mathfrak{A}  \left( M_2 \right)  ^2,\;
\{ \varphi^\ast (f),\varphi^\ast (g) \}_{M_1}
= \varphi^\ast \{f,g\}_{M_2} 
\end{array}
.\right.
\]
\end{definition}


\subsection{Esempi}
\label{___EsempiVarietaDiJacobiParziali}

\begin{example}
\label{Ex_VarietaDiJacobiDiDimensionFinita}
{\sf Varietà di Jacobi di dimensione finita.}\\
Per $M$ varietà di dimension finita, consideriamo $T^\flat M = T'M$ e l'algebra $\mathfrak{A}(M) = C^\infty(M)$. Il tensore $\Lambda$ è un campo tensoriale di tipo $(2,0)$, 
sezione del fibrato tensoriale $T_0^2 M$ (cf. \cite{AbTo11}, 3.2) e $X$ un campo vettoriale che soddisfano le condizioni di compatibilità
\[
\left\lbrace
\begin{array}{rcl}
[\Lambda,\Lambda]	&=&	2X \wedge \Lambda		\\
L_X \Lambda			&=&	0 
\end{array}
\right.
\] 
$(M,\Lambda,X)$ è une varietà di Jacobi parziale.
\end{example}

\begin{example}
\label{Ex_VarietaDiPoissonParzialiConvenienti}
{\sf Varietà di Poisson parziali convenienti.}\\
La nozione di varietà di Poisson parziale corrisponde a $X=0$ (cf. \cite{CaPe23}, 7.1).
\end{example}

\begin{example}
\label{Ex_LimiteDirettoDiVarietaDiJacobiDiDimensionFinite}
{\sf Limite diretto di varietà di Jacobi di dimensioni finite.}\\
Consideriamo la struttura di Jacobi sullo spazio vettoriale $\mathbb{R}^{2m+1}$ con le coordinate canoniche $ \left( x^0,x^1,\dots,x^{2m} \right) $ del esempio~\ref{Ex_StructureDeJacobiSurR2m+1}:
\[
X_m = \dfrac{\partial}{\partial x^0}
\textrm{~~~e~~~}
\Lambda_m = \displaystyle\sum_{i=1}^m
 \left( x^{m+i}\dfrac{\partial}{\partial x^0} - \dfrac{\partial}{\partial x^i} \right) 
 \wedge
\dfrac{\partial}{\partial x^{m+i}}.
\] 
Considerando l'iniezione naturale 
$\iota_{2m+1}^{2m+3} : \mathbb{R}^{2m+1} \to \mathbb{R}^{2m+3}$ che è una mappa di Jacobi, se definisce una successione $  \left( \mathbb{R}^{2m+1},\Lambda_m,X_m, \iota_{2m+1}^{2m+3} \right)  _{m \in \mathbb{N}}$ di varietà di Jacobi.\\
Il limite diretto (o limite induttivo)  $M=\displaystyle\underrightarrow{\lim}\mathbb{R}^{2m+1}$ può essere dotato di una struttura di spazio vettoriale conveniente. \\
Consideriamo la sottoalgebra delle funzioni cilindriche:
\[
C^\infty_{\operatorname{cyl}}(M)
=
\displaystyle\bigcup_{m \in \N}
\pi_m^\ast C^\infty \left( \R^{2m+1} \right)
\]
dove $\pi_m : M \to \R^{2m+1}$ è la proiezione canonica.\\
Una \emph{funzione cilindrica}\index{funzione cilindrica} è quindi della forma 
$f= f_m \circ \pi_m$ per un certo $m$ con $f_m \in C^\infty \left( \R^{2m+1} \right) $.\\
Per le funzioni cilindriche $f$ e $g$, esiste $N \in \N$ sufficientemente grande tale che:
\[
f = f_N \circ \pi_N \text{~~~e~~~} 
g = g_N \circ \pi_N
\]
dove $f_N$ e $g_N$ appartengono a 
$C^\infty \left( \R^{2N+1} \right) $.\\
Si definisce allora la parentesi di Jacobi $\{.,.\}$ su $C^\infty_{\operatorname{cyl}}(M)$ mediante:
\[
\{f,g\} = \{f_N,g_N\}_{ \left( \Lambda_N,E_N \right) } \circ \pi_N
\]
dove $\{.,.\}_{ \left( \Lambda_N,E_N \right) }$ 
è la parentesi di Jacobi standard su $\R^{2N+1}$.\\
Questa definizione è coerente, poiché le iniezioni sono mappe di Jacobi.\\
Si ottiene così una struttura parziale di Jacobi sul limite diretto $M$.
\end{example}

\begin{example}
\label{Ex_TrasformazioniConformidiVarietaDiJacobi}
{\sf Trasformazioni conformi di una varietà di Jacobi.}\\
Sia $\varphi$ un'applicazione di $\mathfrak{A}(M)$ che non si annulla mai. \\
La \emph{trasformazione conforme}\index{trasformazione conforme} di una varietà di Jacobi $(M,\Lambda,X)$ rispetto a $\varphi$ è definita dai tensori $\Lambda_\varphi$ e $X_{\varphi} $ seguenti: 
\begin{description}
\item[$\bullet$]
$\Lambda_\varphi = \varphi \Lambda$
\item[$\bullet$]
$X_{\varphi} 
= \varphi X + \Lambda^\sharp (d \varphi )$
\end{description}
A questa struttura di Jacobi è associata la parentesi (cf. \cite{Marl91}, 2.3, ex. 6):
\[
 \{ f,g \}_\varphi 
 = \dfrac{1}{\varphi} \{ \varphi f, \varphi g \}.
\]
\end{example}

\subsection{Varietà di Poisson omogenei parziali}
\label{VarietaDiPoissonOmogeneiParziali}

\begin{definition}
\label{D_VarietaDiPoissonOmogeneiParziale}
Si chiama \emph{varietà di Poisson omogenea parziale}\index{varietà!di Poisson omogenea parziale} 
una terna $(N,P,Z)$ costituita da una varietà di Poisson parziale $(N,P)$ e da un campo vettoriale $Z$, detto \emph{campo di omotetie}\index{campo!di omotetie}, che soddisfa la relazione 
\[
L_Z P = -P
\]
\end{definition}

A ogni struttura di Jacobi parziale è possibile associare una struttura di Poisson omogenea parziale.

\begin{proposition}
\label{P_VarietaDiPoissonOmogeneaParziale}
Sia $(M,\Lambda,E)$ una varietà di Jacobi parziale.\\
Poniamo $\hat{M} = M \times \R$, fibrato triviale in rette sopra $M$; indichiamo con $t$ la coordinata canonica sulla fibra $\R$ e con 
$Z = \dfrac{\partial}{\partial t}$ il campo vettoriale su $\hat{M}$ la cui proiezione su $\R$ è $1$ e la cui proiezione su $M$ è nulla. 
Sia $h : \hat{M} \to \R$ la funzione omogenea di grado $1$ rispetto a $Z$, definita da 
$h(x,t) = \exp(t)$.  
Definiamo sullo spazio $\hat{M}$ il tensore
\[
\hat{\Lambda} = \dfrac{1}{h}\, (\Lambda + Z \wedge E).
\]
Allora valgono le seguenti proprietà:
\begin{enumerate}
\item[1.] 
$ \left( \hat{M},\hat{\Lambda},Z \right) $ è una varietà di Poisson omogenea parziale.
\item[2.] 
La proiezione $\pi : \hat{M} \to M$ è un morfismo di Jacobi $h$-conforme.
\end{enumerate}
\end{proposition}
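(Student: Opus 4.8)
The plan is to transport the classical ``Poissonization'' of a Jacobi manifold (Lichnerowicz; Dazord--Lichnerowicz--Marle) into the convenient partial setting, using the framework of Section~\ref{__DerivazioniEParentesiDiSchoutenSuTflatM} (all taken from \cite{CaPe23}). First I would fix the ambient data on $\hat M=M\times\R$: since the $\R$-factor is finite dimensional, $T'\hat M$ splits fibrewise as $T'M\oplus T'\R$, and $T^\flat\hat M:=T^\flat M\oplus T'\R$ is a weak subbundle of $T'\hat M$ in the sense of Definition~\ref{D_SottoFibratoDebole}; the associated algebra $\mathfrak A(\hat M)$ contains $\pi^\ast\mathfrak A(M)$, the fibre coordinate $t$, and hence $h=\exp(t)$ with $Z(h)=h$. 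Writing $A:=\Lambda+Z\wedge E$ one computes $A^\sharp(\alpha+c\,dt)=\Lambda^\sharp(\alpha)+\langle\alpha,E\rangle Z-cE$ for $\alpha\in T^\flat_xM$, $c\in\R$, so $A\in\Gamma^\ast_2(\mathfrak A(\hat M))$ and $\hat\Lambda=h^{-1}A\in\Gamma^\ast_2(\mathfrak A(\hat M))$; I would also record $\iota_{dt}\Lambda=0$, $\iota_{dt}(Z\wedge E)=E$, and $L_Z\Lambda=0$, $L_ZE=0$, $[Z,E]=0$, $L_E\Lambda=0$ (the last by \textbf{(VJp2)}), all because $\Lambda$ and $E$ are pulled back from $M$ while $Z=\partial/\partial t$.

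Item~1 splits into two steps. Step~A: show $[A,A]_S=[\Lambda,\Lambda]_S=2\,\iota^\ast(E)\wedge\Lambda$. Expanding by bilinearity and the graded antisymmetry of Theorem~\ref{T_ProprietaDellaParentensiDiSchouten}(2), $[A,A]_S=[\Lambda,\Lambda]_S+2[\Lambda,Z\wedge E]_S+[Z\wedge E,Z\wedge E]_S$; the graded Leibniz rule Theorem~\ref{T_ProprietaDellaParentensiDiSchouten}(3), together with $[\Lambda,Z]_S=-L_Z\Lambda=0$, $[\Lambda,E]_S=-L_E\Lambda=0$ and $[Z,E]_S=[Z,E]=0$, kills the last two terms, while \textbf{(VJp1)} gives the value of $[\Lambda,\Lambda]_S$. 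Step~B handles the conformal factor: from the behaviour of the Schouten bracket under multiplication by a function one obtains $[h^{-1}A,h^{-1}A]_S=h^{-2}[A,A]_S+2h^{-1}\bigl(\iota_{d(h^{-1})}A\bigr)\wedge A$; since $d(h^{-1})=-h^{-1}dt$ one gets $\iota_{d(h^{-1})}A=-h^{-1}\iota_{dt}A=-h^{-1}E$, so the correction term equals $-2h^{-2}E\wedge A=-2h^{-2}\,\iota^\ast(E)\wedge\Lambda$ (using $E\wedge(Z\wedge E)=0$), which exactly cancels $h^{-2}[A,A]_S=2h^{-2}\,\iota^\ast(E)\wedge\Lambda$. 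Hence $[\hat\Lambda,\hat\Lambda]_S=0$, so $(\hat M,\hat\Lambda)$ is a partial Poisson manifold. For the homotheties, $L_Z\hat\Lambda=L_Z(h^{-1})\,A+h^{-1}L_ZA=-h^{-1}A+0=-\hat\Lambda$ since $L_ZA=0$; combined with $[\hat\Lambda,\hat\Lambda]_S=0$ this is precisely Definition~\ref{D_VarietaDiPoissonOmogeneiParziale}, proving item~1.

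For item~2, $\pi^\ast\mathfrak A(M)\subseteq\mathfrak A(\hat M)$ is immediate. The bracket attached to $\hat\Lambda$ on $\mathfrak A(\hat M)$ is $\{f,g\}_{\hat\Lambda}=\hat\Lambda(df,dg)=h^{-1}\bigl(\Lambda(df,dg)+Z(f)\,E(g)-Z(g)\,E(f)\bigr)$. Evaluating on $f=h\,\pi^\ast F$ and $g=h\,\pi^\ast G$ with $F,G\in\mathfrak A(M)$, and using $Z(h\pi^\ast F)=h\pi^\ast F$, $E(h\pi^\ast F)=h\,\pi^\ast(EF)$ (because $E(h)=0$) and $\Lambda(d(h\pi^\ast F),d(h\pi^\ast G))=h^2\pi^\ast(\Lambda(dF,dG))$ (because $\Lambda$ annihilates $dt$), one finds $\{h\pi^\ast F,h\pi^\ast G\}_{\hat\Lambda}=h\,\pi^\ast\{F,G\}_M$, where $\{\cdot,\cdot\}_M$ is the partial Jacobi bracket of $(M,\Lambda,E)$. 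This is precisely the defining relation of an $h$-conformal Jacobi morphism (compare the conformal bracket $\{f,g\}_\varphi=\varphi^{-1}\{\varphi f,\varphi g\}$ of Example~\ref{Ex_TrasformazioniConformidiVarietaDiJacobi} with $\varphi=h$, transported along $\pi$), which is item~2.

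The main obstacle is Step~B of item~1: in the convenient partial framework the Schouten bracket of Proposition~\ref{P_Parentesi} is defined only on admissible multivectors $\Gamma^\ast_k$ with $k\ge1$, so ``multiplication by the function $h^{-1}$'' is not literally an instance of the graded Leibniz rule, and the scaling identity used above must be re-established, either by going back to the derivation-bracket Definition~\ref{D_DcircD'} applied to $D_{\hat\Lambda}$ — checking that all intermediate objects remain order-$1$ kinematic derivations of admissible multivectors — or by invoking the corresponding computation in \cite{CaPe23}. Along the way one must also verify that $\hat\Lambda$, $\iota_{dt}A$ and the Lie derivatives $L_Z(\,\cdot\,)$, $L_E\Lambda$ on $\hat M$ all land in the appropriate admissible spaces; this is exactly where the finite dimensionality of the extra $\R$-factor is used.
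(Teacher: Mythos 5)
Your proposal is correct and follows essentially the same route as the paper: the same decomposition $[A,A]_S=[\Lambda,\Lambda]_S+2[\Lambda,Z\wedge E]_S+[Z\wedge E,Z\wedge E]_S$, the same scaling identity for $[h^{-1}A,h^{-1}A]_S$ (which the paper derives from its properties \textbf{(CS2)}--\textbf{(CS3)} as $[fT,fT]=f^{2}[T,T]+2f[T,f]\wedge T$ with $[T,h^{-1}]=-h^{-1}E$, matching your $\iota_{d(h^{-1})}A$ computation), the same Lie-derivative argument for $L_Z\hat\Lambda=-\hat\Lambda$, and the same direct evaluation of $\{\hat f,\hat g\}_{\hat\Lambda}$ for item~2. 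Your closing caveat about multiplication by $h^{-1}$ not being literally an instance of the graded Leibniz rule on $\Gamma^\ast_k$, $k\ge 1$, is a fair observation, but the paper resolves it exactly as you suggest, by treating $h^{-1}$ as a $0$-vector and applying \textbf{(CS2)}--\textbf{(CS3)}.
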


\begin{proof}
Sia $(M,\Lambda,E)$ una varietà di Jacobi parziale. Consideriamo la varietà 
$\hat{M} = M \times \R$, fibrato triviale in rette sopra $M$, insieme alla proiezione $\pi : \hat{M} \to M$ sul primo fattore.\\
A ogni funzione $g \in C^\infty(M)$ associamo la funzione 
$\hat{g} = h\,\pi^\ast g \in C^\infty(\hat{M})$, che risulta omogenea\footnote{Per $Z = \dfrac{\partial}{\partial t}$, abbiamo  $Z(\hat{g}) = \hat{g}$.} di grado $1$:
\[
\forall (x,t) \in \hat{M},\,
\hat{g}(x,t) = \e^t (g \circ \pi)(x,t) = \e^t g(x).
\]
La sua differenziale è data da
\[
d\hat{g} = h\big( \pi^\ast(dg) + (\pi^\ast g)\, dt \big).
\]

Per ogni aperto della forma $\hat{U} = U \times \R$ di $\hat{M}$ introduciamo l'insieme 
$\mathfrak{A}  \left( \hat{M} \right) $ delle funzioni lisce su $\hat{U}$ tali che, per ogni $\hat{x} \in \hat{U}$, ciascuna derivata di ordine superiore $d^k\hat{g}(\hat{x})$ soddisfi
\begin{equation}
\label{eq_dkhatf}
\forall (\hat{u}_2,\dots,\hat{u}_k) \in (T_{\hat{x}} \hat{M})^{k-1},\,
d^k_{\hat{x}} \hat{g}(\,\cdot\,,\hat{u}_2,\dots,\hat{u}_k) 
\in T^\flat_{\hat{x}} M \times T^\ast \R.
\end{equation}

\medskip
\noindent 1. La dimostrazione si basa in particolare sulle proprietà della parentesi di Schouten\footnote{Queste proprietà generalizzano i risultati ottenuti in \cite{FeMa14} e \cite{CFM21}. Per tensori $P$, $Q$, $R$, antisimmetrici rispettivamente $p$-, $q$-, $r$-controvarianti, valgono:
\[
[P,Q] = -(-1)^{(p-1)(q-1)} [Q,P], \qquad
[P,Q \wedge R] = [P,Q] \wedge R + (-1)^{(p-1)q} Q \wedge [P,R].
\]} 
indicate da \textbf{(CS2)} e \textbf{(CS3)} nel contesto parziale.

Siano $f \in \mathfrak{A}(M)$ e $T \in \Gamma^\ast_2(\mathfrak{A}(M))$.  
Da \textbf{(CS3)} segue:
\[
\begin{array}{rcl}
[fT,fT]
&=& [f \wedge T, f \wedge T]  \\
&=& [f \wedge T,f] \wedge T + f \wedge [f \wedge T,T].
\end{array}
\]
D'altra parte, per la proprietà \textbf{(CS2)}, si ha:
\[
[f \wedge T,f] 
= -(-1)^{(2-1)(0-1)} [f,f \wedge T]
= [f, f \wedge T].
\]
Applicando nuovamente \textbf{(CS3)}, otteniamo:
\[
[f,f \wedge T]
= [f,f] \wedge T + f \wedge [f,T]
= f \wedge [f,T].
\]
In modo analogo,
\[
\begin{array}{rcl}
[f \wedge T, T]
&=& -(-1)^{(2-1)(2-1)} [T, f \wedge T] \\
&=& [T,f] \wedge T + f \wedge [T,T].
\end{array}
\]
Poiché per \textbf{(CS2)} abbiamo
\[
[T,f] = -(-1)^{(2-1)(0-1)} [f,T] = [f,T],
\]
si ottiene infine:
\begin{equation}
[fT,fT] = f^2[T,T] + 2f[T,f] \wedge T.
\tag{1}
\end{equation}
Consideriamo ora $T = \Lambda + Z \wedge E$. Si ha:
\[
[T,T]
= [\Lambda,\Lambda] + 2[\Lambda,Z \wedge E] + [Z \wedge E, Z \wedge E].
\]
Poiché $(M,\Lambda,E)$ è una struttura di Jacobi, si ha:
\[
[\Lambda,\Lambda] = 2E \wedge \Lambda, \qquad [\Lambda,E] = 0,
\]
e le parentesi $[\Lambda,Z]$, $[Z,Z]$, $[E,E]$, $[Z,E]$ sono nulle, segue che
\[
[T,T] = 2E \wedge \Lambda.
\]
Inoltre, per ogni $f \in \mathfrak{A}(M)$, si ha $[\Lambda,f]=0$, e poiché
\[
[Z,f](t) = Z(f)(t) = -f(t),
\]
otteniamo
\[
[T,f] = -fE.
\]

Applicando la formula (1) al caso $f = \dfrac{1}{h}$ con $h(t) = e^t$,
\[
[\hat{\Lambda},\hat{\Lambda}] = 0,
\]
cioè $\hat{\Lambda}$ è un tensore di Poisson.

\smallskip

Per verificare che $\hat{\Lambda}$ è omogeneo di grado $-1$ rispetto à $Z$, ossia che vale 
\[
L_Z \hat{\Lambda} = -\hat{\Lambda},
\]
utilizziamo nuovamente \textbf{(CS3)}:
\[
L_Z \hat{\Lambda}
= [Z,fT]
= [Z,f] \wedge T + f[Z,T].
\]
Poiché $[Z,f] = -f$ e 
\[
[Z,T] = L_Z(\Lambda + Z \wedge E) = 0,
\]
il risultato è dimostrato.\\

\medskip
2. Per $(f,g) \in \mathfrak{A}(M)^2$,
\[
\{\hat{f},\hat{g}\}_{\hat{\Lambda}}
= \hat{\Lambda}(d\hat{f}, d\hat{g}).
\]
Un calcolo diretto mostra che
\[
\{\hat{f},\hat{g}\}_{\hat{\Lambda}}
= h\,\pi^\ast(\{f,g\}_\Lambda)
= \widehat{\{f,g\}_\Lambda}.
\]
Dunque l'applicazione
\[
f \longmapsto \hat{f} = h\,\pi^\ast f
\]
realizza un omomorfismo di parentesi che trasforma la parentesi di Jacobi $\{.,.\}_\Lambda$ su $M$ nella parentesi di Poisson $\{.,.\}_{\hat{\Lambda}}$ su $\hat{M}$ tramite la funzione $h$; ciò coincide con la definizione usuale di morfismo di Jacobi $h$-conforme.
\end{proof}

\subsection{Distribuzione caratteristica}
\label{___DistribuzioneCaratteristica}

Sia $(M,\Lambda,X)$ una varietà di Jacobi parziale conveniente.\\

La distribuzione $\mathscr{C}$ generata dai campi Hamiltoniani $X_f = \Lambda^\sharp df +fE$ 
dove $f \in \mathfrak{A}(M)$ è chiamata \emph{distribuzione caratteristica}\index{distribuzione caratteristica} di la varietà di Jacobi parziale.\\

Poiché, in virtù della proposizione~\ref{P_MorfismoDiAlgebreDiLie}, per ogni coppia $(f,g)$ di funzioni di 
$\mathfrak{A}(M)$, vale la relazione
\[
\left[ X_f,X_g \right] = X_{\{f,g\}}
\]
si ottiene la seguente proposizione:   
\begin{proposition}
\label{D_DistribuzioneCaratteristicaInvolutiva}
La distribuzione caratteristica $\mathscr{C}$ è involutiva.
\end{proposition}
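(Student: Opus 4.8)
The plan is to show that $\mathscr{C}$ is involutive by verifying that the Lie bracket of any two local vector fields lying in $\mathscr{C}$ is again a section of $\mathscr{C}$. Since $\mathscr{C}$ is by definition generated (as a $C^\infty(M)$-module, pointwise) by the Hamiltonian vector fields $X_f = \Lambda^\sharp(df) + fX$ with $f \in \mathfrak{A}(M)$, it suffices to check the bracket condition on these generators and then extend to arbitrary sections by the Leibniz rule for the Lie bracket.

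First I would recall Proposition~\ref{P_MorfismoDiAlgebreDiLie}, which states that $f \mapsto X_f$ is a morphism of Lie algebras, i.e. $[X_f, X_g] = X_{\{f,g\}}$ for all $f,g \in \mathfrak{A}(M)$. Since $\{f,g\} \in \mathfrak{A}(M)$ by the remark following the definition of the Jacobi bracket (which relies on Proposition~\ref{P_AUalgebra}, part 2), the vector field $X_{\{f,g\}}$ is again a Hamiltonian vector field associated to an element of $\mathfrak{A}(M)$, hence lies in $\mathscr{C}$. This already shows that $\mathscr{C}$ is closed under brackets of generators.

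Next I would address the passage from generators to general sections. A local section of $\mathscr{C}$ is a finite sum $Y = \sum_i \varphi_i X_{f_i}$ with $\varphi_i \in C^\infty(U)$ and $f_i \in \mathfrak{A}(U)$. Using bilinearity of the bracket and the identity $[\varphi X, \psi Y] = \varphi\psi[X,Y] + \varphi(X\psi)Y - \psi(Y\varphi)X$, the bracket $[\sum_i \varphi_i X_{f_i}, \sum_j \psi_j X_{g_j}]$ expands into a sum of terms each of which is a $C^\infty$-multiple of either $[X_{f_i},X_{g_j}] = X_{\{f_i,g_j\}}$, or of $X_{f_i}$, or of $X_{g_j}$; every such term is a $C^\infty$-multiple of a Hamiltonian vector field, hence a section of $\mathscr{C}$. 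Therefore $[Y,Z]$ is a section of $\mathscr{C}$, which is the assertion of involutivity.

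The main obstacle — or rather the only subtle point — is the localization issue raised in Remark~\ref{R_LocalisazioneVarietaDiJJacobiarziali}: in the convenient setting one must work with functions defined on open sets $U$ rather than globally, and one must know that the Jacobi bracket, the Hamiltonian assignment $f \mapsto X_f$, and the Lie-algebra morphism property all localize correctly. This is precisely why the paper placed itself in the ``localizable bracket'' setting; granting that, the Lie-algebra morphism identity $[X_f,X_g]=X_{\{f,g\}}$ holds on each $U$ and the argument above goes through verbatim on each chart. No further calculation is needed.
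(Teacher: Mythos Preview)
Your proposal is correct and follows essentially the same approach as the paper: the paper's entire justification is the sentence immediately preceding the proposition, which invokes Proposition~\ref{P_MorfismoDiAlgebreDiLie} to obtain $[X_f,X_g]=X_{\{f,g\}}$ and concludes involutivity from that alone. Your write-up simply makes explicit the two points the paper leaves implicit---that $\{f,g\}\in\mathfrak{A}(M)$ so $X_{\{f,g\}}$ is again a generator of $\mathscr{C}$, and that closure on generators propagates to $C^\infty$-linear combinations via the Leibniz rule---together with the localization caveat; none of this departs from the paper's line of argument.
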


Nel quadro conveniente, l'integrabilità di una struttura di Poisson non è in generale garantita. In effetti, la difficoltà emerge già nel contesto delle varietà di Banach. Il lettore può trovare in \cite{PeCa19} delle condizioni sufficienti affinché un tale risultato perché questo risultato abbia luogo.


Il risultato seguente stabilisce condizioni sufficienti affinché la distribuzione caratteristica associata a una struttura di Jacobi parziale sia integrabile nel contesto di Banach.

\begin{theorem}
\label{T_CSIntegrabiliteDistributionCaracteristiqueJacobiVarieteDeBanach}
Sia $(M,\Lambda,E)$ una varietà di Banach dotata di una struttura di Jacobi parziale.\\
Se vale
\begin{itemize}
\item[\emph{\textbf{(CSI)}}]
l'immagine di $\Lambda^\sharp$ è un sottofibrato liscio, chiuso e scisso, cioè esiste un sottofibrato liscio, chiuso e supplementare\footnote{Ricordiamo che un sottospazio chiuso di uno spazio di Banach non ha necessariamente un supplemento (cf. \cite{Phi40}); ad esempio, nello spazio di Banach $\ell^\infty$ delle successioni reali limitate, il sottospazio chiuso $c_0$ delle successioni reali convergenti a $0$ non ha un supplemento.} $V$ tale che
\[
\forall x \in M,\; T_x M = \operatorname{im}\Lambda^\sharp_x\oplus V_x
\]
\end{itemize}
allora la distribuzione caratteristica della varietà di Jacobi parziale è integrabile.\\
Inoltre, le foglie della  foliazione associata sono le proiezioni su $M$ delle foglie simplettiche della struttura di Poisson omogenea associata
\[
\left( \hat{M}=M \times \R, 
\hat{\Lambda} 
= \dfrac{1}{h} (\Lambda + \partial_t \wedge E) \right)
\] 
dove $h(x,t)=\operatorname{exp}(t)$.
\end{theorem}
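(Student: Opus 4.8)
The plan is to reduce integrability of $\mathscr{C}$ to the corresponding statement for the characteristic distribution of a partial Poisson manifold --- for which the integrability criterion of \cite{PeCa19} is available --- and then to transport the resulting foliation back through the projection of the homogeneous Poissonization of Proposition~\ref{P_VarietaDiPoissonOmogeneaParziale}. First I set $\hat M=M\times\R$, $h(x,t)=\e^t$, $Z=\partial_t$ and $\hat\Lambda=\tfrac1h(\Lambda+Z\wedge E)$, so that, by Proposition~\ref{P_VarietaDiPoissonOmogeneaParziale}, $(\hat M,\hat\Lambda,Z)$ is a homogeneous partial Poisson (indeed Banach) manifold and $\pi:\hat M\to M$ is an $h$-conformal Jacobi morphism; write $\hat{\mathscr C}=\operatorname{im}\hat\Lambda^\sharp$ for its characteristic distribution.

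Next I record how the two structures interlock. From the computation in the proof of Proposition~\ref{P_VarietaDiPoissonOmogeneaParziale}, the map $f\mapsto\hat f=h\,\pi^\ast f$ carries the Jacobi bracket $\{.,.\}_\Lambda$ on $M$ to the Poisson bracket $\{.,.\}_{\hat\Lambda}$ on $\hat M$ up to the conformal factor $h$; since $\hat\Lambda$ is homogeneous of degree $-1$ and $\hat f$ of degree $1$, the Poisson--Hamiltonian field $\hat X_{\hat f}=\hat\Lambda^\sharp(d\hat f)$ is $Z$-invariant, hence $\pi$-projectable, and a direct calculation identifies its projection with the Jacobi--Hamiltonian field $X_f=\Lambda^\sharp df+fE$ of Proposition~\ref{P_MorfismoDiAlgebreDiLie}. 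A fibrewise inspection of $\hat\Lambda^\sharp$ gives
\[
\hat{\mathscr C}_{(x,t)}=\bigl\{\,\Lambda^\sharp_x\alpha+bE_x+\langle\alpha,E_x\rangle Z\ :\ \alpha\in T^\flat_xM,\ b\in\R\,\bigr\},
\]
whence $\pi_\ast\bigl(\hat{\mathscr C}_{(x,t)}\bigr)=\operatorname{im}\Lambda^\sharp_x+\R E_x=\mathscr C_x$ and therefore $\pi_\ast\hat{\mathscr C}=\mathscr C$. Note also that $\ker d\pi=\R Z$ is contained in $\hat{\mathscr C}$ exactly over $\{x:E_x\notin\operatorname{im}\Lambda^\sharp_x\}$ and meets $\hat{\mathscr C}$ trivially elsewhere.

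Then I check that \textbf{(CSI)} for $\Lambda^\sharp$ furnishes the hypotheses of \cite{PeCa19} for $\hat\Lambda$. By the displayed description, $\hat{\mathscr C}_{(x,t)}$ is the image of the bounded linear map $(\alpha,b)\mapsto\bigl(\Lambda^\sharp_x\alpha+bE_x,\ \langle\alpha,E_x\rangle\bigr)$ on $T^\flat_xM\times\R$; using that a closed complemented subspace of a Banach space stays closed and complemented after adjoining a line (in the direction of $E_x$) and after passing to the graph of a bounded functional, \textbf{(CSI)} forces $\hat{\mathscr C}_{(x,t)}$ to be closed and split in $T_{(x,t)}\hat M$, a complement being built from $V_x$ and $\R Z$. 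Granting this, \cite{PeCa19} yields that $\hat{\mathscr C}$ is integrable, its leaves being the symplectic leaves of $(\hat M,\hat\Lambda)$. Finally I push the foliation down: since $\pi$ is a surjective submersion with $\pi_\ast\hat{\mathscr C}=\mathscr C$, each symplectic leaf $\hat L$ is carried by $\pi$ onto a weakly immersed integral manifold $\pi(\hat L)$ of $\mathscr C$, which is maximal because $\hat L$ is and because every $\mathscr C$-curve lifts to an $\hat{\mathscr C}$-curve (horizontally where $\R Z\subset\hat{\mathscr C}$, by the unique lift inside $\hat L$ where $\hat{\mathscr C}\cap\ker d\pi=0$; a given $\mathscr C$-curve stays in one leaf, hence in a region of a single type). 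Hence $\mathscr C$ is integrable, its leaves are precisely the projections $\pi(\hat L)$ of the symplectic leaves of $\bigl(\hat M=M\times\R,\ \hat\Lambda=\tfrac1h(\Lambda+\partial_t\wedge E)\bigr)$ with $h(x,t)=\e^t$, and involutivity of $\mathscr C$ was already recorded in Proposition~\ref{D_DistribuzioneCaratteristicaInvolutiva}.

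The step I expect to be the main obstacle is bridging \textbf{(CSI)} for $\Lambda$ and the hypotheses of \cite{PeCa19} for $\hat\Lambda$: the rank of $\hat\Lambda^\sharp$ is in general \emph{not} locally constant even when $\operatorname{im}\Lambda^\sharp$ is a genuine subbundle, since it jumps by two across the locus $\{E\in\operatorname{im}\Lambda^\sharp\}$ separating conformal-symplectic-type and contact-type behaviour. One therefore cannot simply assert that \textbf{(CSI)} for $\Lambda$ \emph{is} \textbf{(CSI)} for $\hat\Lambda$; the argument must be carried out pointwise (closedness and splitness of each $\hat{\mathscr C}_{(x,t)}$), the rank remaining constant along each leaf so that the singular set is of Stefan--Sussmann type, and one must invoke the form of the \cite{PeCa19} criterion that tolerates this. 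The curve-lifting in the last step has to be controlled with the same care across that locus.
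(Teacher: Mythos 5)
Your proposal follows essentially the same route as the paper: Poissonization on $\hat M = M\times\R$ with $\hat\Lambda=\tfrac1h(\Lambda+\partial_t\wedge E)$, the fibrewise identification of $\hat{\mathscr C}=\operatorname{im}\hat\Lambda^\sharp$ and of its projection onto $\mathscr C$, integrability of $\hat{\mathscr C}$ from the splitting hypothesis together with involutivity, and push-down of the symplectic leaves through the surjective submersion $\pi$. The obstacle you flag at the end --- that the behaviour of $\operatorname{im}\hat\Lambda^\sharp$ can change across the locus where $E$ enters $\operatorname{im}\Lambda^\sharp$, so that \textbf{(CSI)} for $\Lambda$ does not automatically deliver a closed split subbundle for $\hat\Lambda$ --- is a genuine issue, and the paper's proof does not resolve it either: it simply asserts that \textbf{(CSI)} ``guarantees an analogous property'' for $\hat\Lambda^\sharp$ before invoking the Banach Frobenius theorem, so your explicit identification of this step as the point requiring a pointwise or Stefan--Sussmann-type argument is, if anything, more careful than the published argument.
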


\begin{proof}
L'idea della dimostrazione dell'integrabilità completa, sviluppata nel caso di dimensione finita da Kirillov, consiste nel ridursi alla struttura di Poisson omogenea associata $ \left( \hat{M},\hat{\Lambda},Z \right) $ (cf. Proposizione~\ref{D_DistribuzioneCaratteristicaInvolutiva}). La distribuzione 
$\hat{\mathcal{C}} = \operatorname{Im}, \hat{\Lambda} \subset T\hat{M}$, associata al tensore di Poisson, è integrabile nel senso di Stefan-Sussmann e induce una foliazione 
$\hat{\mathcal{F}}$ le cui foglie sono sottovarietà simplettiche immerse (cf. \cite{Wei83} e \cite{Vai94}).\\
Poniamoci nel contesto di una varietà di Banach dotata di una struttura di Jacobi parziale
$(M,\Lambda,E)$ che soddisfa la condizione $\textbf{(CSI)}$.\\
Consideriamo inoltre la struttura di Poisson omogenea associata
$ \left( \hat{M},\hat{\Lambda},Z  \right) $ dove $\hat{M} = M \times \R$ e 
$\hat{\Lambda} 
= \dfrac{1}{h} (\Lambda + \partial_t \wedge E)$ con $h(x,t) = \operatorname{exp}(t)$. \\
D'altra parte, indichiamo con $p: \hat{M} \to M$ la proiezione canonica associata.\\
Per ogni punto $(x,t) \in \hat{M}$ e per ogni funzione $f \in \mathfrak{A}(M)$, si ha
\[
\hat{\Lambda}^\sharp_{(x,t)}
 \left( p^\ast(df) + (f \circ p) dt \right) 
 =
\e^{-t}
\left(
\Lambda^\sharp_x(df) + f(x)E_x -df_x (E_x) \partial t 
\right)
\]
che può essere riscritto usando il campo Hamiltoniano $X_f=\Lambda^\sharp$ come
\[
\hat{\Lambda}^\sharp_{(x,t)}
 \left( p^\ast(df) + (f \circ p) dt \right) 
 =
\e^{-t}
\left(
X_f -df_x (E_x) \partial t 
\right)
\]
La distribuzione caratteristica $\hat{\mathcal{C}}$ della struttura di Poisson omogenea $\hat{\Lambda}$ è allora definita, per ogni punto $(x,t) \in \hat{M}$, da
\[
\hat{\mathcal{C}}(x,t) = \operatorname{im} \left( \hat{\Lambda}^\sharp _{(x,t)} \right)  
=
\operatorname{span} 
\left\lbrace 
X_f (x) - df_x \left( E_x \right) \partial t
\right\rbrace
\]
La proiezione di questa distribuzione tramite $d p$ coincide esattamente con la distribuzione $\mathcal{C}$.\\

D'altra parte, la condizione $\textbf{(CSI)}$ garantisce un'analoga proprietà per l'immagine di $\hat{\Lambda}^\sharp$: essa assicura infatti l'esistenza di un fibrato liscio $\hat{V}$, supplementare chiuso del fibrato liscio chiuso $\operatorname{im} \hat{\Lambda}^\sharp$.\
Poiché inoltre la distribuzione di Poisson $\hat{C}$ è involutiva, il teorema di Frobenius per varietà di Banach assicura l'integrabilità della distribuzione caratteristica $\hat{C}$ (cf. \cite{Omo97}) e dunque l'esistenza di una foliazione  $\hat{\mathcal{F}}$ le cui foglie sono sottovarietà simplettiche immerse.\\

Infine, poiché la proiezione $p$ è una submersione liscia suriettiva e trasversale alla foliazione $\hat{\mathcal{F}}$, le foglie caratteristiche della foliazione $\mathcal{F}$ associata alla struttura di Jacobi su $M$ si ottengono come proiezioni delle foglie della foliazione $\hat{\mathcal{F}}$.
\end{proof}

\begin{remark}
\label{R_TipiDiFoglieDiversi}
Questa foliazione è costituita da due tipi di foglie.\\
Se il campo $E$ è contenuto nell'immagine di $\Lambda^\sharp$, la foglia $F$ può essere dotata di una struttura di varietà localmente conformemente simplettica. Nel caso contrario, la foglia $F$ potrebbe essere dotata di una struttura che generalizza, nel contesto delle varietà di Banach, la nozione di varietà di contatto. La condizione $\theta \wedge (d\theta)^m$ che è una forma di volume, non ha più senso in questo contesto di dimensione infinita;  potrebbe essere sostituita dall'esistenza di una $1$-forma $\theta$ tale che, su $F$, si abbia $\theta(E) = 1$, $\ker \theta = \operatorname{im}\Lambda^\sharp$ e $d\theta_x|_{\ker \theta_x}$ non degenere.
\end{remark}

D'altra parte, nel contesto conveniente, esistono condizioni sufficienti per l'integrabilità di una distribuzione di rango finito, localmente generata da particolari tipi di campi Hamiltoniani.

\begin{theorem}
\label{T_CondizioniSufficientiDiIntegrabilitaDistribuzioneHamiltonianaRangoFinitoSuVarietaConveniente}
Sia $M$ una varietà conveniente e sia $F$ un sottofibrato di dimensione finita $n$ del fibrato tangente cinematico $TM$.\\
Se, per ogni $x \in M$, esiste un intorno aperto $U$ di $x$ e $n$ campi vettoriali Hamiltoniani locali $X_{f_1},\dots,X_{f_n}$ (con $f_i \in \mathfrak{A}(M)$), aventi flussi locali\footnote{Nel contesto conveniente, un campo vettoriale cinematico non ha necessariamente un flusso locale. Infatti, al di là del contesto degli spazi di Banach, i risultati classici sull'esistenza e unicità delle soluzioni delle equazioni differenziali, derivanti da teoremi del punto fisso, non si applicano più necessariamente (cf. \cite{KrMi97}, 32.12).}
$\operatorname{Fl}_t^{X_{f_1}},\dots,\operatorname{Fl}_t^{X_{f_n}}$, 
allora la distribuzione $F$ è integrabile.
\end{theorem}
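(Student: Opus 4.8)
The plan is to reproduce the classical flow-box proof of the Frobenius theorem, using the hypothesis on the existence of local flows to supply the one ingredient that is not automatic in the convenient framework, and exploiting the finite rank of $F$ to confine every differential-equation argument to finite dimension. Since integrability is a local question, I would fix a point $x_0\in M$ and work in the neighbourhood $U$ supplied by the hypothesis, on which --- shrinking $U$ if necessary --- the fields $X_1:=X_{f_1},\dots,X_n:=X_{f_n}$ (with $f_i\in\mathfrak{A}(M)$) form a frame of $F_{|U}$ and admit local flows $\Phi^i_t=\operatorname{Fl}^{X_i}_t$. First I would record that $F_{|U}$ is involutive and write $[X_i,X_j]=\sum_{k=1}^{n}c_{ij}^{k}\,X_k$ with $c_{ij}^{k}\in C^\infty(U)$: by Proposition~\ref{P_MorfismoDiAlgebreDiLie} the bracket of two Hamiltonian vector fields is again Hamiltonian, so that these brackets remain tangent to $F$ (this holds in particular when $F$ coincides with a finite-rank component of the characteristic distribution, involutive by Proposition~\ref{D_DistribuzioneCaratteristicaInvolutiva}). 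It then suffices to produce, through every point of $U$, an $n$-dimensional integral manifold of $F$.

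The key lemma will be that $F$ is invariant under each flow, i.e.\ $T\Phi^i_s(F_q)=F_{\Phi^i_s(q)}$ wherever it is defined. To prove it I would fix $i$ and a point $q$ and, for $k=1,\dots,n$, consider the curve $v_k(s):=T\Phi^i_{-s}\bigl(X_k(\Phi^i_s(q))\bigr)\in T_qM$, which starts at $v_k(0)=X_k(q)\in F_q$. From the convenient-calculus identity $\tfrac{d}{ds}(\Phi^i_s)^{\ast}X_k=(\Phi^i_s)^{\ast}[X_i,X_k]$ (cf.\ \cite{KrMi97}, \S32) together with the involutivity relation above, the tuple $s\mapsto(v_1(s),\dots,v_n(s))$ solves the linear system $\dot v_k(s)=\sum_{l=1}^{n}c_{ik}^{l}(\Phi^i_s(q))\,v_l(s)$; its coefficient matrix $\bigl(c_{ik}^{l}(\Phi^i_s(q))\bigr)$ is an $n\times n$ matrix of smooth scalar functions, so the associated resolvent equation $\dot R=A(s)R$, $R(0)=\mathrm{Id}$, lives in $\mathbb{R}^{n\times n}$ and has a unique smooth solution, and testing against the continuous linear functionals on $T_qM$ yields $v_k(s)=\sum_{l=1}^{n}R_{kl}(s)\,X_l(q)\in F_q$. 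Consequently $X_k(\Phi^i_s(q))=T\Phi^i_s(v_k(s))\in T\Phi^i_s(F_q)$ for every $k$, and a dimension count gives $F_{\Phi^i_s(q)}=T\Phi^i_s(F_q)$.

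With the invariance lemma available, I would define $\Psi$ on a neighbourhood $W$ of $0$ in $\mathbb{R}^n$ by $\Psi(t_1,\dots,t_n)=\bigl(\Phi^1_{t_1}\circ\cdots\circ\Phi^n_{t_n}\bigr)(x_0)$. As a composition of the smooth flow maps, $\Psi$ is smooth, and $T_0\Psi(\partial_{t_i})=X_i(x_0)$, so $T_0\Psi$ is injective with image $F_{x_0}$; since $F_{x_0}$ is a finite-dimensional, hence topologically complemented, subspace of the model space $\mathbb{M}$, after shrinking $W$ the map $\Psi$ becomes an embedding onto an $n$-dimensional submanifold $N\ni x_0$. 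Differentiating $\Psi$ and applying the invariance property $i-1$ times shows $T_{\Psi(t)}\Psi(\partial_{t_i})\in F_{\Psi(t)}$ for each $i$; as $\dim T_pN=n=\dim F_p$ for every $p\in N$, this forces $T_pN=F_p$, so that $N$ is an integral manifold of $F$ through $x_0$. Running the same construction from an arbitrary base point of $U$ and invoking the standard uniqueness statement for integral manifolds of a constant-rank involutive distribution, I would finally glue these local integral manifolds into a foliation, which is exactly the integrability of $F$.

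I expect the main obstacle to be not a single hard estimate but the convenient-calculus bookkeeping: one must make sure that the local flows, the map $\Psi$ assembled from them, and the pull-back identity $\tfrac{d}{ds}(\Phi^i_s)^{\ast}X_k=(\Phi^i_s)^{\ast}[X_i,X_k]$ are all legitimate in the sense of Fr\"olicher--Kriegl--Michor, which rests on \cite{KrMi97} and on the fact that a kinematic vector field admitting a local flow admits a smooth one. The only genuinely non-formal input is the standing hypothesis that the $X_{f_i}$ possess local flows --- kinematic vector fields in general do not --- and, granted this, the finite rank of $F$ keeps every differential-equation step inside finite-dimensional spaces, where existence and uniqueness cause no trouble.
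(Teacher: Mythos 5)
Your proposal is correct in substance, but it takes a much longer road than the paper: the paper's entire proof is a one-line appeal to \cite{Tei01}, Theorem~2 (Teichmann's Frobenius theorem on convenient manifolds), applied to the involutive distribution locally generated by the Hamiltonian fields $A_i=X_{f_i}$, each admitting a local flow. What you have written is essentially a self-contained reconstruction of the proof of that cited theorem: the flow-invariance lemma $T\Phi^i_s(F_q)=F_{\Phi^i_s(q)}$ obtained by reducing the transport equation to an $n\times n$ resolvent system in $\mathbb{R}^{n\times n}$ and separating points of $T_qM$ by bounded linear functionals, followed by the composite-flow parametrization $\Psi(t_1,\dots,t_n)=\Phi^1_{t_1}\circ\cdots\circ\Phi^n_{t_n}(x_0)$. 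This is exactly the mechanism behind Teichmann's result, and you correctly isolate the two non-formal inputs (existence of local flows, which is a genuine hypothesis in the convenient setting, and finite rank, which confines all ODE arguments to finite dimension). So your argument buys self-containedness at the price of re-proving a black box the paper is content to cite.

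One point deserves more care, in your write-up and arguably in the paper as well: involutivity of $F$. Your justification --- that $[X_{f_i},X_{f_j}]=X_{\{f_i,f_j\}}$ is again Hamiltonian, hence tangent to the characteristic distribution $\mathscr{C}$ --- does not by itself place this bracket inside the rank-$n$ subbundle $F$ spanned by $X_{f_1},\dots,X_{f_n}$, unless $F$ locally exhausts $\mathscr{C}$ or one assumes $\{f_i,f_j\}$ lies in the span of the $f_k$ in the appropriate sense. In Teichmann's theorem involutivity of the generating family, $[A_i,A_j]=\sum_k c_{ij}^k A_k$ with smooth coefficients, is a standing hypothesis, and the paper's proof invokes it as such (``la distribuzione involutiva generata localmente dai campi Hamiltoniani''). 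You should state this as a hypothesis (or verify it for the specific $F$ at hand) rather than derive it from the mere Hamiltonian character of the brackets; once that is granted, the rest of your argument goes through.
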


\begin{proof}
Si applica \cite{Tei01}, Theorem~2, alla distribuzione involutiva generata localmente dai campi Hamiltoniani $A_i=X_{f_i}$ ($i \in \{1,\dots,n\}$), ciascuno dei quali possiede un flusso locale.
\end{proof}

%
%
%
%
%


\section{Sviluppi ulteriori}
\label{SviluppiUlteriori}

Vengono qui suggerite alcune linee di ricerca future relative alla teoria delle varietà di Jacobi parziali convenienti.
\begin{enumerate}
\item
Ci si può interessare ai problemi relativi alla restrizione di una struttura di Jacobi parziale su una varietà conveniente $M$ a una sotto-varietà $N$ di $M$, come è stato fatto in dimensione finita da C.-M. Marle in \cite{Marl2000}: si cercano allora condizioni sufficienti affinché la restrizione di tale struttura a $N$ erediti una struttura analoga. In questo modo, si generalizza la nozione di sotto-varietà di Poisson di A.~Weinstein (cf. \cite{Wei83}) nonché le strutture di Poisson sullo spazio delle fasi di un sistema meccanico con vincoli cinematici di Van der Schaft (cf. \cite{VdSMa94}).
\item
La nozione di fibrato di Jacobi, intesa come generalizzazione del concetto di varietà di Jacobi, è stata introdotta, in dimensione finita, da C.-M. Marle in \cite{Marl91}, dove si dimostra che lo spazio totale di un tale fibrato è dotato di una struttura di varietà di Poisson omogenea. Si potrebbe quindi, innanzitutto, definire una nozione di fibrato di Jacobi parziale su una varietà conveniente e verificare se questo risultato possa essere esteso a questo contesto.
\item
In \cite{CaPe23}, 7.2, viene introdotta la nozione di algebroide di Lie parziale e viene messo in evidenza un legame con le strutture di Poisson parziali. Sorge quindi il problema di capire se tale legame possa essere esteso alle strutture di algebroidi di Jacobi parziali e alle varietà di Jacobi parziali.
\item
I limiti diretti di successioni ascendenti di strutture di dimensione finita forniscono numerosi esempi interessanti di strutture convenienti in algebra:
$\R^{\infty}=\underrightarrow{\lim}\R^n$  (\cite{Spa14}, esempio~3.1),
$\mathbb{S}^{\infty}=\underrightarrow{\lim}\mathbb{S}^n$ (\cite{KrMi97}, 47.2), 
$\operatorname{GL}(\infty,\R)=\underrightarrow{\lim}\operatorname{GL}(n,\R)$ (\cite{KrMi97}, 47.8), etc.\
D'altra parte, i limiti diretti di alcune successioni crescenti di strutture parziali di Poisson, di Nambu-Poisson o anche di Dirac, definite su varietà di Banach, forniscono esempi di strutture parziali convenienti dello stesso tipo (cf. rispettivamente \cite{CaPe23}, \cite{PeCa24a} e \cite{PeCa24b}).\\
Risulta quindi interessante individuare condizioni sufficienti sulle successioni di strutture parziali di Jacobi tali che il loro limite diretto sia dotato di una struttura parziale di Jacobi conveniente.
\end{enumerate}



\end{document}